\newtheorem{theorem}{Theorem}[section]
\newtheorem{corollary}[theorem]{Corollary}
\newtheorem{lemma}[theorem]{Lemma}
\newtheorem{proposition}[theorem]{Proposition}
\theoremstyle{definition}
\newtheorem{definition}[theorem]{Definition}
\newtheorem{remark}[theorem]{Remark}
\newtheorem{algorithm}{Algorithm}[section]
\newtheorem{Acknowledgments}{Acknowledgments}
\newcommand{\IR}{{\mathbb{R}}}
\newcommand{\IN}{{\mathbb{N}}}
\newcommand{\IB}{{\mathbb{B}}}
\newcommand{\BE}{\begin{equation}}
\newcommand{\EE}{\end{equation}}
\numberwithin{equation}{section}
\title[Inexact Descent Algorithm for MOs on General Riemannian Manifolds]{Convergence analysis of  inexact descent algorithm for multiobjective  optimizations on   Riemannian manifolds without curvature constraints}
\author[X. M. Wang]{Xiangmei Wang}
\address[X. M. Wang]{College of Science, Guizhou University, Guiyang 550025, P. R. China}
\email{{\tt xmwang2@gzu.edu.cn}}
\author[J. H. Wang]{Jinhua Wang}
\address[J. H. Wang]{Department of Mathematics,
 Hangzhou Normal University, Hangzhou 311121, P. R. China}
\email{\tt wangjh@hznu.edu.cn}
\author[C. Li]{Chong Li}
\address[C. Li]{Department of Mathematics, Zhejiang University, Hangzhou 310027,P. R. China}
\email{\tt cli@zju.edu.cn}
\keywords{Riemannian manifold; multiobjective optimization; inexact descent algorithm;
full convergence; sectional curvature.}
\subjclass[2010]{90C29; 
65K05}
\begin{document}

\begin{abstract} We study the  convergence issue for inexact descent algorithm (employing general step sizes) for multiobjective optimizations on general Riemannian manifolds  (without curvature constraints). Under the assumption of the local convexity/quasi-convexity, local/global convergence  results are established. On the other hand, without the assumption of the local convexity/quasi-convexity, but under a  Kurdyka-{\L}ojasiewicz-like condition, local/global linear convergence results are presented, which seem new even in   Euclidean spaces setting and improve sharply the corresponding results in \cite{WLWYSIAM2019} in  the case when the multiobjective optimization is reduced to the scalar case. Finally, for  the special case when the  inexact descent algorithm  employing
    Armijo rule, our results
 improve  sharply/extend  the corresponding ones in \cite{Bento2013M,Bento2012M,Wangxm2019}. 
\end{abstract}

\maketitle


\section{Introduction}
Let $F:\IR^m\rightarrow\IR^n$ be a vector function defined on $\IR^m$.
The multicriteria optimization problem consists of minimizing several objective functions simultaneously, which
 is formulated as follows:
\begin{equation}\min_{x\in \IR^m}F(x).\label{eq-vectoro}\end{equation} Since there is usually no single point which will minimize all given objective functions
simultaneously, the concept of Pareto-optimality or efficiency is considered in stead of the concept of optimality.
 Recall from \cite{Fliege2000M,HuCWLYSiam} that a point $p\in \IR^m$ is called a Pareto point of \eqref{eq-vectoro} 
(or an efficient point), if there does not exist a different point $q\in \IR^m$ such that $F(q)\preceq F(p)$ and $F(q)\neq F(p)$ (where sign ``$\preceq$" means the classical partial order on Euclidean space $\IR^n$; see \eqref{ptd} in Section 2 for the definition.)

 Problem \eqref{eq-vectoro}
 arises in many applications such as engineering disciplines, location science, statistics, management science; see, e.g., [5,6,13,24] and the references therein. 
One of the standard techniques for finding the Pareto points  of \eqref{eq-vectoro} is the scalarization approach, which in fact tries to compute a discrete approximation to the whole set of the Pareto points. Since it was proposed by Geoffrion in \cite{Geoffrion1968} for solving the multicriteria optimization problems in Euclidean spaces, 
the scalarization technique has been extensively studied in the literature; see, e.g., \cite{DasDennis1998, Drummond2008,Jahn1984,Luc1989,Fonseca1995,Miettinen1999} for more details. 
In general, the scalarization approach requires some parameters to be specified in advance, leaving
the modeler and the decision-maker with the burden of choosing them. Another important approach for finding the Pareto points is the descent-type method.
This type of method usually does not require any parameter information, which includes
such as the (steepest) descent algorithm, Newton method, proximal point method and trust-region method; see, e.g., \cite{Fliege2000M,Fliege2009SIAM,DrummondI2004,Drummond2005M,FukudaD2011,FukudaD2013,BonnelIS2005,ChenHY2005,Ryu2014SAIM}.
We are particularly interested in the (steepest) descent algorithm proposed by Fliege and Svaiter in \cite{Fliege2000M} for solving the multicriteria optimization problem in Euclidean spaces, 
which was well-studied and has been extended to the multiobjective optimization (equipped with the partial order induced by a general closed convex pointed cone); see. e.g., \cite{DrummondI2004,Drummond2005M,FukudaD2011,FukudaD2013} and the references therein. 

Recently,
some important notions, techniques and approaches in Euclidean spaces have been
extended to Riemannian manifold settings; see, e.g., \cite{Ferreira2005,LiMWY2011,LiY2012,Mahony1996,Miller2005,WLWYSIAM2015} and the references therein. As pointed out in \cite{Bento2013M}, such extensions are natural and, in general, nontrivial; and enjoy some important advantages; see, e.g., \cite{Absil2008,Smith1994,Udriste1994,Yang2007,WLWYSIAM2019} for more details.
In particular, in \cite{WLWYSIAM2019},  the gradient algorithm (employing general step sizes) was extended  for scalar  optimization problems on general Riemannian manifolds  (without curvature constraints). Under the assumption of the local convexity/quasi-convexity (resp. weak sharp minima), local/global convergence  (resp. linear convergence) results are established (see \cite{WLWYSIAM2019}).

One the other hand,,
the exact/inexact 
descent algorithm  employing
    Armijo rule  was recently extended to solve the multicriteria optimization problem on Riemannian manifolds in \cite{Bento2012M,Bento2013M}, 
where it was shown that the partial convergence property (i.e., each cluster point of the generated sequence by the  inexact descent algorithm is a Pareto critical point) holds  on general Riemannian manifolds, while the full convergence does for the (vector) objective
function being quasi-convex on the whole manifold   of nonnegative
sectional curvatures; see \cite[Theorems 5.1 and 5.2]{Bento2013M}.  The further development of  this   full convergence results  of  the exact/inexact 
descent algorithm  employing
    Armijo rule have been given in \cite{Wangxm2019} where they were  established    under the following weaker  assumption
    \begin{itemize}
    \item [{\bf (A)}]
      the objective function   is quasi-convex only on a sub-level set which is of curvatures bounded from below.
\end{itemize}

The main purpose of the present paper is to   study the local/global convergence issue for the inexact descent algorithm (employing general step sizes) for multiobjective optimizations on general Riemannian manifolds  (without curvature constraints).
The present paper contains two topics  of convergence results for the descent algorithm
employing more general step sizes (which includes the Armijo step sizes   as a special case).

One is  the local/global convergence  for locally quasi-convex function $F$  which includes local convergence, that is,  any sequence generated with initial point close enough to a critical point     converges   to  a critical point (see Theorem  \ref{Local-Convergence}, which seem  new   in the linear space setting),  and the   global convergence   which means that   any sequence    generated with arbitrary initial point  from the domain of the function $F$ does (see Theorem \ref{full-1}(i) and Corollary  \ref{full-AG}).  In particular,   the global convergence result is established  for the descent  algorithm employing  the Armijo step sizes    under the following weaker assumption than (A)   (see Lemma \ref{lemma-BX}):
\begin{itemize}
    \item [{\bf (H)}] The generated sequence $\{p_k\}$   has a cluster point $\bar p$   and $F$ is quasi-convex around $\bar p$.
\end{itemize}

The other is    the locally/globally linear  convergence  without locally quasi-convex assumption for $F$  which includes local convergence, that is,  any sequence generated with initial point close enough to a weak  Pareto  optimum    converges   to  a weak  Pareto  optimum(see Theorem  \ref{Local-Linear}, which seems  new   in the linear space setting in the case when the    Kurdyka-{\L}ojasiewicz-like  property holds at the weak  Pareto  optimum),  and the   global convergence   which means that   any sequence    generated with arbitrary initial point  from the domain of the function $F$ does (see Theorem \ref{full-1}(ii) and Corollary  \ref{full-AG}), that is,
   if  the following  assumption is   assumed,  we   show    that the sequence $\{p_k\}$ converges linearly:
\begin{itemize}
   \item The generated sequence $\{p_k\}$   has a cluster point $\bar p$ which is a locally weak  Pareto  optimum,  the Kurdyka-{\L}ojasiewicz-like  property holds  at     $\bar p$   and the step sizes $\{t_k\}$ has a positive lower bound.
\end{itemize}
(Note by Lemma \ref{Bt-Lip} that the Armijo step sizes has a positive lower bound if Jacobian  $JF$ is Lipschitz continuous around $\bar p$).
 To the best of our knowledge, this global  linear convergence result also seems  new even  in the linear space setting.

Note that our results in the present paper extend/improve the corresponding results in \cite{WLWYSIAM2019} for scalar optimization problems on Riemannian manifolds to multiobjective optimizations on Riemannian manifolds. In particular, it should be remarked that for the linear convergence of the gradient method,
 our result  improves sharply the corresponding result in \cite{WLWYSIAM2019} in the sense that we remove  the local  quasi-convexity assumption;
see Remark \ref{extendgra}.

The remaining of the paper is organized as follows. Some basic notions and notation on Riemannian manifolds and  the inexact descent algorithm employing general step sizes  for solving the multicriteria problem on Riemannian manifolds  are presented in the next section. 
In Section 3,  some related properties about the convexity properties
of vector  functions and some  useful lemmas are presented,  and
local  convergence  results are established, while   locally linear convergence result is presented in Section 4. Global  convergence  (resp. linear convergence) results are presented in the last section.  

\section{Preliminaries and inexact descent algorithm}

\subsection{Notation and notions on Riemannian manifolds}\ \

The notation and notions on Riemannian manifolds used in the present paper are standard, and the readers are referred to some textbooks for more details; see, e.g.,
\cite{Carmo1992,Sakai1996,Udriste1994}.

Let $M$ be a connected and complete $m$-dimensional Riemannian manifold. We use
$\nabla$ to denote the Levi-Civita connection on $M$. Let $p\in M$,
and let $T_{p}M$ stand the tangent space at $p$ to $M$. We denote by
$\langle,\rangle_{p}$ the scalar product
 on $T_{p}M$ with the associated norm $\|\cdot\|_{p}$, where the subscript $p$ is
sometimes omitted. For $q\in{M}$, let $\gamma:[0,1]\rightarrow M$ be
a piecewise smooth curve joining $p$ to $q$. Then, the arc-length of
$\gamma$ is defined by $l(\gamma):=\int_{0}^{1}\|{\gamma}'(t)\|dt$; and the Riemannian distance from $p$ to $q
$ is defined by ${\rm
d}(p,q):=\inf_{\gamma}l(\gamma)$, where the infimum is taken over
all piecewise smooth curves $\gamma:[0,1]\rightarrow M$ joining $p$
to $q$. 
A smooth curve $\gamma$ is called a
geodesic 
if and only
if $\nabla_{{\gamma}'}{{\gamma}'}=0$. A geodesic joining $p$ to $q$ is said to be minimal if its
arc-length equals the Riemannian distance between $p$ and $q$. By
the Hopf-Rinow theorem \cite{Carmo1992}, $(M,{\rm d})$ is a complete
metric space, and there is at least one minimal geodesic joining $p$
to $q$. The closed metric ball in $M$ centered at the point $p$
with radius $r>0$ is denoted by $\mathbb{B}(p,r)$, i.e.,
$$
\mathbb{B}(p,r):=\{q\in M:{\rm d}(p,q)\leq r\}.
$$
Let $Q\subseteq M$ be a subset and $p,\,q\in Q$. The set of
all geodesics $\gamma:[0,1]\rightarrow M$ with $\gamma(0)=p$ and
$\gamma(1)=q$ satisfying $\gamma([0,1])\subseteq Q$ is denoted by   $\Gamma^Q_{pq}$, that is,
$$\Gamma^Q_{pq}:=\{\gamma:[0,1]\rightarrow Q:\;\gamma(0)=p,\, \gamma(1)=q\mbox{ and } \nabla_{{\gamma}'}{\gamma}'=0\}.
$$
Recall the convexity radius $r_{\rm cvx}(p)$ of $p\in M$ which is defined by
\begin{equation}\label{convexity-radius}
r_{\rm cvx}(p):=\sup\left\{r>0:\begin{array}{ll}&\mbox{each ball in }\IB(p,r) \mbox{ is strongly convex}\\
&\mbox{and each geodesic in } \IB(p,r)\mbox{ is minimal}\end{array}\right\}.
\end{equation}
Then, $r_{\rm cvx}(p)>0$ for any $p\in M$; see, e.g., \cite[Theorem 5.3]{Sakai1996}.

Definition \ref{convexset} below presents the notions of different kinds of convexities about subsets in $M$;
see e.g., \cite{LiLi2009,Wang2010}.

\begin{definition}\label{convexset}
A nonempty subset $Q$ of the Riemannian manifold $M$ is said to be

{\rm (a)} weakly convex if and only if, for any $p,q\in Q$, there is a minimal
geodesic of $M$ joining $p$ to $q$ and it is in $Q$;

{\rm (b)} 
totally convex if and only if, for any $p,q\in Q$, all geodesics of $M$ joining $p$ to $q$ lie in $Q$.
\end{definition}

Note by definition that the  strong/total  convexity  implies the weakly convexity for any subset $Q$. 

\subsection{Convexity}\ \

Below, we recall the notion of convexity of
  a  real-valued scalar  function $f:M\rightarrow{\IR}$.
Item (b) in the following definition was known in \cite[Definition 6.1 (b)]{LiMWY2011} (for the convexity) and \cite[Definition 2.2]{Papa2008}
(for the quasi-convexity). 

\begin{definition}\label{convexfunction}
Let $f:M\rightarrow {\IR}$
and let $Q\subseteq M$ be weakly convex.  Then, $f$ is said to be

{\rm (a)} convex (resp.  quasi-convex) on $Q$ if,
for any $x, y\in Q$ and any geodesic $\gamma\in
\Gamma^Q_{xy}$, the composition $f\circ\gamma:[0,1]\rightarrow\IR$ is convex (resp.   quasi-convex) on $[0,1]$;

{\rm(b)}   pseudo-convex on $Q$ if $f$ is differentiable and for any $p,q\in Q$, any geodesic $\gamma\in
\Gamma^Q_{pq}$, there holds:
$$\langle\nabla f(p),\gamma'(0)\rangle\ge 0\quad\Longrightarrow\quad f(q)\ge f(p).$$

{\rm (c)} 
%
convex  (resp.  quasi-convex, pseudo-convex) if  $f$  is   convex  (resp.   quasi-convex, pseudo-convex) on $M$.

{\rm (d)}  convex  (resp.  quasi-convex, pseudo-convex) around $x\in M$ if 
$f$ is convex  (resp.   quasi-convex, pseudo-convex) on $\IB(x,r)$ for some $r>0$.
\end{definition}

It is clear that the convexity implies the quasi-convexity and  pseudo-convexity (assuming $f$ is differentiable). The assertions in the following lemma can be proved directly by definition and are known for some special cases; see. e.g., \cite[Theorems 5.1, 6.2]{Udriste1994} for assertion  (i)  and \cite[Proposition 3.1]{Nemeth1998} for assertion (ii).

\begin{lemma}\label{QC-F}
Let $f:M\rightarrow {\IR}$ be differentiable. Let $Q $ be weakly convex and let $x\in Q$. 
Then, the following assertions hold.

{\rm (i)} If $f$ is convex on   $Q$, then it holds  for any 
$y\in Q$ that
$$
f(y)\ge f(x)+\langle\nabla f(x),\gamma_{xy}'(0)\rangle\quad\mbox{for all }\gamma_{xy}\in\Gamma_{xy}^Q.
$$

{\rm (ii)}
If $f$ is quasi-convex on   $Q$, then it holds for any 
$y\in Q$ with $f(y)\le f(x)$ that
$$
\langle\nabla f(x),\gamma_{xy}'(0)\rangle\le0\quad\mbox{for all }\gamma_{xy}\in\Gamma_{xy}^Q.
$$


\end{lemma}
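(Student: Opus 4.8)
The plan is to reduce both assertions to elementary one-variable facts about the restriction $\varphi:=f\circ\gamma_{xy}:[0,1]\rightarrow\IR$ of $f$ along a fixed geodesic $\gamma_{xy}\in\Gamma^Q_{xy}$. The bridge between the manifold and the interval is the chain rule: since $\gamma_{xy}$ is smooth and $f$ is differentiable, $\varphi$ is differentiable with
$$\varphi'(t)=\langle\nabla f(\gamma_{xy}(t)),\gamma_{xy}'(t)\rangle,$$
so in particular $\varphi'(0)=\langle\nabla f(x),\gamma_{xy}'(0)\rangle$, while $\varphi(0)=f(x)$ and $\varphi(1)=f(y)$. Once this identification is in place, both assertions become statements about the behavior at $t=0$ of a (quasi-)convex function of one real variable, and the geodesic being arbitrary in $\Gamma^Q_{xy}$ gives the ``for all'' quantifier for free.

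For assertion (i), I would invoke Definition \ref{convexfunction}(a): the convexity of $f$ on $Q$ says exactly that $\varphi$ is convex on $[0,1]$. The supporting-line (subgradient) inequality for a differentiable convex function on an interval gives $\varphi(t)\ge\varphi(0)+\varphi'(0)\,t$ for all $t\in[0,1]$; evaluating at $t=1$ and substituting the three identities above yields
$$f(y)\ge f(x)+\langle\nabla f(x),\gamma_{xy}'(0)\rangle,$$
which is the desired conclusion.

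For assertion (ii), Definition \ref{convexfunction}(a) now gives that $\varphi$ is quasi-convex on $[0,1]$, and the hypothesis $f(y)\le f(x)$ reads $\varphi(1)\le\varphi(0)$. I would argue by contradiction: assume $\varphi'(0)>0$. On one hand, quasi-convexity forces the sublevel set $\{t\in[0,1]:\varphi(t)\le\varphi(0)\}$ to be an interval; it contains both $0$ and $1$, hence equals $[0,1]$, so $\varphi(t)\le\varphi(0)$ for every $t$. On the other hand, $\varphi'(0)>0$ produces some $t>0$ with $\varphi(t)>\varphi(0)$, a contradiction. Therefore $\varphi'(0)\le0$, that is, $\langle\nabla f(x),\gamma_{xy}'(0)\rangle\le0$.

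The argument is essentially routine; the only point requiring care is the quasi-convex case, where one must resist appealing to a supporting-line inequality (which quasi-convexity does \emph{not} supply) and instead exploit the convexity of the sublevel sets. I expect this contradiction step to be the sole subtlety, everything else being the chain rule together with a standard one-variable inequality.
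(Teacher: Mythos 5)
Your proof is correct and matches the paper's intent: the paper omits the argument, noting only that both assertions ``can be proved directly by definition'' (citing Udriste and N\'emeth for special cases), and your reduction via the chain rule to the one-variable convex/quasi-convex restriction $f\circ\gamma_{xy}$ is exactly that direct argument. Your care in avoiding a supporting-line inequality in the quasi-convex case---using instead $\varphi(t)\le\max\{\varphi(0),\varphi(1)\}=\varphi(0)$ to bound the difference quotient at $t=0$---is the right touch.
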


Below, we extend  the notions of  different kinds of convexities to  vector functions on $M$, which are known for the case when $Q=M$; see, items (a), (b) in \cite[definition 5.1]{Bento2012M}
and item (c) in \cite[definition 5.1]{Bento2013M}. To proceed, as usual, we use ``$\preceq$" and ``$\prec$" to denote the classical  partial order and the strictly partial order defined by
\begin{equation}\label{ptd}
\mbox{$x\preceq y$ (or $y\succeq x$) $\Longleftrightarrow$ $y-x\in \IR_{+}^n$ \quad for $x,y\in\IR^n$}
\end{equation}
and
\begin{equation*}\label{sptd}
\mbox{$x\prec y$ (or $y\succ x$) $\Longleftrightarrow$ $y-x\in \IR_{++}^n$ \quad for $x,y\in\IR^n$,}
\end{equation*}
respectively, where $$\IR_{+}^{n}:=\{x=(x_i)\in \IR^{n}:x_i\geq 0,i\in I\}$$ and $$\IR_{++}^{n}:=\{x=(x_i)\in \IR^{n}:x_i>0,i\in I\}.$$

\begin{definition}\label{CQC}
Let $Q\subseteq M$ be weakly convex. The vector function $F:M\rightarrow\IR^n$ is said to be

{\rm (a)} convex on $Q$ if for any $p,q\in Q$ and any geodesic $\gamma\in
\Gamma^Q_{pq}$, there holds:
$$F(\gamma(t))\preceq(1-t)F(p)+tF(q)\quad\mbox{for any }t\in [0,1].$$

{\rm (b)} quasi-convex on $Q$ if for any $p,q\in Q$ and any geodesic $\gamma\in
\Gamma^Q_{pq}$, there holds:
$$F(\gamma(t))\preceq\max\{F(p),F(q)\}\quad\mbox{for any }t\in [0,1].$$

{\rm (c)} pseudo-convex on $Q$ if $F$ is differentiable and for any $p,q\in Q$, any geodesic $\gamma\in
\Gamma^Q_{pq}$, there holds:
$$JF(p)(\gamma'(0))\nprec 0\quad\Longrightarrow\quad F(q)\nprec F(p).$$

\end{definition}

Clearly for a vector function, the convexity implies both the pseudo-convexity (assuming that $F$ is differentiable) and the quasi-convexity.

Proposition \ref{eq-convex} below  shows the equivalence between the convexity of $F$ and its scalarization. Its proof is easy and so is omitted here.
\begin{proposition}\label{eq-convex} Let $Q\subseteq M$ be weakly convex.
$F:=(f_i)_{i\in I}:M\rightarrow\IR^n$ is  convex  (resp.  quasi-convex, pseudo-convex) on $Q$ if and only if  for each $\{\alpha_i: i\in I\}\subset[0,1]$ with $\sum_{i\in I}\alpha_i=1$, $\sum_{i\in I}\alpha_if_i$ is  convex  (resp.  quasi-convex, pseudo-convex) on $Q$.
\end{proposition}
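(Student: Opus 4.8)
The plan is to prove Proposition \ref{eq-convex} by reducing the vector-valued convexity notions to scalar ones via the structure of the partial order $\preceq$ and its compatibility with nonnegative convex combinations. The key observation I would exploit is that the cone $\IR_+^n$ is characterized by its supporting functionals: a vector $v\in\IR^n$ satisfies $v\succeq 0$ if and only if $\langle\alpha,v\rangle\ge 0$ for every $\alpha$ in the (closed) simplex $\Delta:=\{\alpha=(\alpha_i)_{i\in I}\subset[0,1]:\sum_{i\in I}\alpha_i=1\}$, since the extreme points of $\Delta$ are the standard basis vectors $e_i$ and $\langle e_i,v\rangle=v_i$. This duality is exactly what links the componentwise order to the family of scalarizations $f_\alpha:=\sum_{i\in I}\alpha_i f_i$.

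First I would treat the convex case. Fix $p,q\in Q$ and a geodesic $\gamma\in\Gamma^Q_{pq}$. By Definition \ref{CQC}(a), $F$ is convex on $Q$ iff the vector inequality $F(\gamma(t))\preceq(1-t)F(p)+tF(q)$ holds for all $t\in[0,1]$; equivalently, iff $(1-t)F(p)+tF(q)-F(\gamma(t))\in\IR_+^n$. By the duality above, this is equivalent to $\langle\alpha,(1-t)F(p)+tF(q)-F(\gamma(t))\rangle\ge 0$ for every $\alpha\in\Delta$, which upon expanding reads $f_\alpha(\gamma(t))\le(1-t)f_\alpha(p)+tf_\alpha(q)$. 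Quantifying over all $p,q,\gamma$ and all $t$, this says precisely that $f_\alpha\circ\gamma$ is convex on $[0,1]$ for each $\alpha\in\Delta$, i.e.\ that $f_\alpha$ is convex on $Q$ in the sense of Definition \ref{convexfunction}(a). The pseudo-convex case is handled the same way, noting that $JF(p)(\gamma'(0))=(\langle\nabla f_i(p),\gamma'(0)\rangle)_{i\in I}$, so $\langle\nabla f_\alpha(p),\gamma'(0)\rangle=\langle\alpha,JF(p)(\gamma'(0))\rangle$, and then translating the defining implication of Definition \ref{CQC}(c) through the same scalarization, again using that $v\nprec 0$ corresponds to a supporting-functional condition.

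The main obstacle is the quasi-convex case, where the naive scalarization fails: the coordinatewise maximum $\max\{F(p),F(q)\}$ does not pass linearly through $\langle\alpha,\cdot\rangle$, and in general $f_\alpha$ quasi-convex for every $\alpha$ is \emph{not} equivalent to $F$ quasi-convex. I would therefore argue the quasi-convex equivalence directly rather than through the simplex duality. For the forward direction, if $F$ is quasi-convex on $Q$ then $F(\gamma(t))\preceq\max\{F(p),F(q)\}$ componentwise, so $f_i(\gamma(t))\le\max\{f_i(p),f_i(q)\}$ for each $i$; applying $\alpha_i\ge 0$ and summing gives $f_\alpha(\gamma(t))\le\sum_i\alpha_i\max\{f_i(p),f_i(q)\}$, and I would bound the right-hand side by $\max\{f_\alpha(p),f_\alpha(q)\}$ when it holds—this is where care is needed, and in fact the clean statement of the proposition for the quasi-convex case likely rests on the componentwise reformulation that $F$ is quasi-convex iff each coordinate $f_i$ (i.e.\ the scalarization with $\alpha=e_i$) is quasi-convex. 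Since the extreme points $e_i$ already lie in $\Delta$, the ``only if'' direction of the proposition is immediate, and for the ``if'' direction one invokes that quasi-convexity of all the $e_i$-scalarizations, namely of each $f_i$, yields the componentwise inequality defining quasi-convexity of $F$. This is precisely the reason the authors remark the proof is ``easy''—for the convex and pseudo-convex cases one needs the full simplex $\Delta$ and the supporting-functional duality, while for quasi-convexity the coordinate projections suffice and the equivalence is almost tautological once phrased componentwise.
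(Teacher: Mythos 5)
Your convex-case argument is correct: the simplex duality ($v\succeq 0$ iff $\langle\alpha,v\rangle\ge 0$ for all $\alpha$ in the simplex $\Delta$, with the vertices $e_i$ giving the converse) reduces Definition \ref{CQC}(a) to convexity of each $f_\alpha$, modulo the small point you should make explicit that subsegments of a geodesic $\gamma\in\Gamma^Q_{pq}$ are again geodesics in $\Gamma^Q$ between their endpoints, so the endpoint inequality over all pairs in $Q$ does yield convexity of $f_\alpha\circ\gamma$ on $[0,1]$. The paper omits its own proof (``easy''), so the comparison is only about correctness, and here your quasi-convex paragraph is internally inconsistent. You rightly observe that quasi-convexity of all scalarizations $f_\alpha$ is \emph{not} equivalent to quasi-convexity of $F$, but you then swap the directions: since $e_i\in\Delta$, it is the ``if'' direction of the proposition that is immediate (all $f_\alpha$ quasi-convex $\Rightarrow$ each $f_i=f_{e_i}$ quasi-convex $\Rightarrow$ $F$ quasi-convex componentwise), whereas the ``only if'' direction demands quasi-convexity of \emph{every} convex combination, which is exactly what fails: sums of quasi-convex functions need not be quasi-convex. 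Concretely, on $M=Q=\IR$ take $F=(f_1,f_2)=(x^3,-x)$; both coordinates are monotone, hence quasi-convex, so $F$ is quasi-convex in the sense of Definition \ref{CQC}(b), yet $f_\alpha(x)=\frac{1}{2}(x^3-x)$ for $\alpha=(\frac12,\frac12)$ has the nonconvex sublevel set $\{x:x^3\le x\}=(-\infty,-1]\cup[0,1]$. So no argument along your lines can close this half (your attempted bound $\sum_i\alpha_i\max\{f_i(p),f_i(q)\}\le\max\{f_\alpha(p),f_\alpha(q)\}$ indeed goes the wrong way), and the ``only if'' half of the quasi-convex assertion is false as the proposition is literally stated.

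The pseudo-convex case is likewise \emph{not} ``handled the same way'' as the convex case. The hypothesis $JF(p)(\gamma'(0))\nprec 0$ and the conclusion $F(q)\nprec F(p)$ are existential conditions ($\max_i v_i\ge 0$, i.e., $\langle\alpha,v\rangle\ge0$ for \emph{some} $\alpha\in\Delta$), not universal ones, so the per-$\alpha$ implication defining pseudo-convexity of $f_\alpha$ does not transfer: from $\langle\nabla f_\alpha(p),\gamma'(0)\rangle\ge0$ you get $JF(p)(\gamma'(0))\nprec 0$ and hence, if $F$ is pseudo-convex, some index $j$ with $f_j(q)\ge f_j(p)$ --- but this says nothing about $f_\alpha(q)$ versus $f_\alpha(p)$. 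The same example $F=(x^3,-x)$ is pseudo-convex in the sense of Definition \ref{CQC}(c) (since the coordinates are oppositely monotone, $JF(p)(v)\prec 0$ and $F(q)\prec F(p)$ are both impossible, so the implication holds trivially), while $f_{(1/2,1/2)}$ is not pseudo-convex: with $p=0$, $q=-2$ one has $f_\alpha'(0)(q-p)=1\ge0$ but $f_\alpha(-2)=-3<0=f_\alpha(0)$. In summary: your convex case is sound; your quasi-convex case misidentifies which direction is trivial and proves nothing beyond the ``if'' half; your pseudo-convex case would fail at the transfer step; and in fact the ``only if'' halves for quasi- and pseudo-convexity are false under Definitions \ref{CQC}(b)--(c) --- a point worth flagging, since the paper invokes precisely that direction (for quasi-convexity) in the proof of Lemma \ref{basic-QC}.
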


Furthermore, using the same arguments for proving \cite[Proposition 5.1]{Bento2013M} (for the case when $Q:=M$), one can check the following lemma.

\begin{lemma}\label{Prop1}
Let $F$ be a differentiable vector function. Then, $F$ is quasi-convex on $Q$ if and only if, for any $p,q\in Q$ and any geodesic $\gamma_{pq}\in
\Gamma^Q_{pq}$,
$$F(q)\preceq F(p)\quad\Longrightarrow\quad JF(p)(\gamma_{pq}'(0))\preceq 0.$$
Consequently, $F$ is pseudo-convex implies that it is quasi-convex.
\end{lemma}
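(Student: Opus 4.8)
The plan is to reduce both implications to the classical first-order characterization of quasi-convexity of a function of one real variable, read along geodesics. Fix $p,q\in Q$, a geodesic $\gamma:=\gamma_{pq}\in\Gamma_{pq}^Q$, and set $H:=F\circ\gamma=(h_i)_{i\in I}\colon[0,1]\to\IR^n$ with $h_i:=f_i\circ\gamma$. Since any subarc of a geodesic in $\Gamma^Q$ is again such a geodesic, with initial velocity a positive multiple of $\gamma'(s)$, the displayed gradient condition is, along $\gamma$, equivalent to
\[ H(t)\preceq H(s)\ \Longrightarrow\ (t-s)\,H'(s)\preceq0\qquad(s,t\in[0,1]), \]
where $H'(s)=JF(\gamma(s))(\gamma'(s))=(h_i'(s))_{i\in I}$; and, as $\preceq$ and $\max$ are coordinatewise, $F$ is quasi-convex on $Q$ exactly when each $h_i$ is quasi-convex on $[0,1]$ for every such $\gamma$. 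Thus the whole statement is reduced to one-dimensional assertions about the coordinate functions.

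For necessity I would argue coordinatewise. If $F$ is quasi-convex and $F(q)\preceq F(p)$, then $h_i(1)\le h_i(0)$, so quasi-convexity of $h_i$ gives $h_i(t)\le\max\{h_i(0),h_i(1)\}=h_i(0)$ for all $t$; hence $h_i$ does not increase at $0$ and $h_i'(0)=\langle\nabla f_i(p),\gamma'(0)\rangle\le0$. As $i$ is arbitrary, $JF(p)(\gamma'(0))\preceq0$. This is exactly Lemma \ref{QC-F}(ii) applied to each coordinate.

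Sufficiency is the substance. I would proceed by contradiction: if $F$ is not quasi-convex, then some coordinate $h_{i_0}$ fails to be quasi-convex along some $\gamma$, so there are $s_1<s_2$ and a level $c$ with $h_{i_0}(s_1),h_{i_0}(s_2)\le c$ while $h_{i_0}$ exceeds $c$ at an interior maximizer $m\in(s_1,s_2)$. I would then fabricate a violating instance of the reduced condition from a pair of points straddling $m$. Choosing a value $v\in(c,h_{i_0}(m))$ and the crossings $a<m<b$ nearest to $m$ with $h_{i_0}(a)=h_{i_0}(b)=v$, the mean value theorem gives (for all but countably many $v$) $h_{i_0}'(a)>0$ and $h_{i_0}'(b)<0$, while the two endpoints share their $i_0$-coordinate. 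When $n=2$ the single remaining coordinate is real-valued, so $H(a),H(b)$ are automatically comparable; feeding whichever of $H(a)\preceq H(b)$, $H(b)\preceq H(a)$ holds into the reduced condition (at $s=b,t=a$, resp. $s=a,t=b$) forces $(a-b)h_{i_0}'(b)\le0$ (resp. $(b-a)h_{i_0}'(a)\le0$), contradicting the signs above. The principal obstacle lies precisely in the comparability clause: because $\preceq$ is only a partial order, for $n\ge3$ the straddling endpoints may be incomparable in the coordinates $j\ne i_0$, and then neither branch fires. I expect to remove this as in \cite{Bento2013M}, by letting $v$ vary and tracking the whole sublevel set $\{t:h_{i_0}(t)\le c\}$ along $\gamma$ to locate an anchor point that is comparable, in every coordinate, to a point where $h_{i_0}$ lies above $c$; this global tracking is the genuinely non-routine ingredient.

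For the concluding assertion that pseudo-convexity implies quasi-convexity, the cleanest route is via scalarization: by Proposition \ref{eq-convex} a pseudo-convex $F$ has every convex combination $\sum_{i}\alpha_i f_i$ pseudo-convex; each such scalar function is then quasi-convex by the standard real-variable fact (at an interior maximizer of $(\sum_i\alpha_i f_i)\circ\gamma$ the derivative vanishes, so the defining implication of pseudo-convexity forces the maximal value not to exceed an endpoint value, a contradiction), and taking $\alpha=e_i$ shows each $f_i$, hence $F$, is quasi-convex. Alternatively one argues directly through the characterization just proved, using the contrapositive form $F(q)\prec F(p)\Rightarrow JF(p)(\gamma'(0))\prec0$ of Definition \ref{CQC}(c); there the only delicate point is passing from this strict hypothesis to the non-strict one required by the characterization, which I would handle by a continuity argument along $\gamma$.
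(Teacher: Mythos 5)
Your reduction to one variable, your necessity argument (Lemma \ref{QC-F}(ii) applied coordinatewise along subarcs of $\gamma$), and your $n=2$ case are all sound, and the necessity half is indeed all the paper ever uses later. But for the half you correctly call ``the substance,'' your proposal stops exactly where the work begins: for $n\ge 3$ you only promise to import from \cite{Bento2013M} an unspecified ``global tracking'' argument that locates a pair of points comparable in every coordinate. The paper itself offers no more than a one-line delegation to the arguments of \cite[Proposition 5.1]{Bento2013M}, so there is nothing in the paper your sketch could be checked against; as a proof, the $n\ge3$ sufficiency step is simply missing.

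More seriously, that missing step cannot be supplied, because comparable pairs need not exist at all. Take $M=Q=\IR$ and $F(x)=(x^3-x,\,-x,\,x)$. The premise $F(q)\preceq F(p)$ forces $-q\le -p$ and $q\le p$ simultaneously, hence $q=p$, and the only geodesic in $\Gamma^Q_{pp}$ is constant, so the displayed gradient condition holds vacuously; yet $f_1(x)=x^3-x$ is not quasi-convex (it vanishes at $x=-1$ and $x=0$ but is positive at $x=-1/\sqrt{3}$), so $F$ fails Definition \ref{CQC}(b), the componentwise $\max$ being exceeded in the first coordinate. Thus no argument that hunts for an anchor point comparable in all coordinates can close your gap --- with the componentwise order and $\max$ as defined in the paper, the ``if'' direction is false for $n\ge3$, and the flaw is inherited from the cited source rather than repaired by it. The same example blocks both of your routes to the final clause: this $F$ is vacuously pseudo-convex (the relation $F(q)\prec F(p)$ is impossible), while your first route invokes Proposition \ref{eq-convex} precisely in the direction that fails here (vector pseudo-convexity does not yield pseudo-convexity of the scalarizations: the $f_1$ above is not pseudo-convex, since $f_1'(0)\cdot(-1.1)>0$ while $f_1(-1.1)<f_1(0)$), and your second route rests on the sufficiency direction just refuted. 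So your instinct that the comparability clause is the genuinely non-routine ingredient was right, but the honest conclusion is stronger: that ingredient does not exist, and any correct version of Lemma \ref{Prop1} must either strengthen the right-hand condition or add hypotheses guaranteeing that nontrivial comparable pairs occur.
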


The following lemma is useful; see
\cite[Proposition 5.2]{Bento2013M}.

\begin{lemma}\label{remarkp}
If $F$ is pseudo-convex (e.g., convex) (on $M$), then a point $p\in M$ is a
Pareto critical point of $F$ if and only if it is a weak Pareto optimum of \eqref{MP}.
\end{lemma}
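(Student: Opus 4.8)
The plan is to prove the two implications separately, recalling the standard notions that the excerpt does not restate: $p$ is a \emph{Pareto critical point} of $F$ exactly when no tangent vector provides a simultaneous descent direction, i.e. $JF(p)(v)\nprec 0$ for every $v\in T_pM$, and $p$ is a \emph{weak Pareto optimum} of \eqref{MP} exactly when there is no $q\in M$ with $F(q)\prec F(p)$. The implication ``weak Pareto optimum $\Rightarrow$ Pareto critical'' holds for any differentiable $F$, so the pseudo-convexity hypothesis will be used only in the reverse one.

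For the first implication I would argue by contraposition. Suppose $p$ is not Pareto critical, so there is $v\in T_pM$ (necessarily $v\neq 0$) with $JF(p)(v)\prec 0$, that is $\langle\nabla f_i(p),v\rangle<0$ for every $i\in I$. Let $\gamma$ be the geodesic with $\gamma(0)=p$ and $\gamma'(0)=v$, which is defined for all $t$ since $M$ is complete. For each $i$ one has $\frac{d}{dt}(f_i\circ\gamma)(t)\big|_{t=0}=\langle\nabla f_i(p),v\rangle<0$, so there is $\delta_i>0$ with $(f_i\circ\gamma)(t)<f_i(p)$ for $0<t<\delta_i$; since $I$ is finite, taking $t$ positive and smaller than $\min_{i\in I}\delta_i$ yields $q:=\gamma(t)$ with $F(q)\prec F(p)$. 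Thus $p$ is not a weak Pareto optimum, which proves the contrapositive.

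For the reverse implication I would invoke Definition \ref{CQC}(c) in contrapositive form. Contraposing
$$JF(p)(\gamma'(0))\nprec 0\ \Longrightarrow\ F(q)\nprec F(p)$$
and using that $\neg(x\nprec y)$ is exactly $x\prec y$, one gets: whenever $F(q)\prec F(p)$, every geodesic $\gamma\in\Gamma^M_{pq}$ satisfies $JF(p)(\gamma'(0))\prec 0$. Now assume $p$ is Pareto critical but, for contradiction, not a weak Pareto optimum, so some $q\in M$ has $F(q)\prec F(p)$; a geodesic $\gamma\in\Gamma^M_{pq}$ exists by the Hopf--Rinow theorem. The displayed contrapositive then forces $JF(p)(\gamma'(0))\prec 0$, contradicting Pareto criticality of $p$ applied to the vector $\gamma'(0)\in T_pM$. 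Hence no such $q$ exists and $p$ is a weak Pareto optimum. The one place where I expect care to be required is the double-negation bookkeeping for the strict order $\prec$ when contraposing pseudo-convexity; once the equivalence $\neg(x\nprec y)\Leftrightarrow x\prec y$ is recorded, the rest is immediate, and no curvature or convexity assumption beyond pseudo-convexity enters the argument.
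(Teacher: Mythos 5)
Your proof is correct and is essentially the argument behind the paper's citation: the paper gives no proof of its own, deferring to \cite[Proposition 5.2]{Bento2013M}, whose proof is exactly your two-step scheme (a first-order descent argument along a geodesic for ``weak Pareto optimum $\Rightarrow$ critical'', and the contrapositive of the pseudo-convexity implication plus Hopf--Rinow for the converse). Your handling of the double negation $\neg(x\nprec y)\Leftrightarrow x\prec y$ and of the quantifiers over $\Gamma^M_{pq}$ is accurate, so nothing is missing.
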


\subsection{Multiobjective optimizations on Riemannian manifold}\ \

Below, we consider a vector function $F:M\rightarrow\IR^n$ given by
$$\mbox{$F(p):=(f_i(p))_{i\in I}=(f_1(p),f_2(p),\dots,f_n(p))$ \quad for any $p\in M$},$$
where $I:=\{1,2,\cdots,n\}$ and for each $i\in I$, $f_i:M\rightarrow\IR$ is a function defined on $M$. The vector function $F$ is said to be (continuously) differentiable if each $f_i$ is (continuously) differentiable ($i\in I$). For a continuously differentiable vector function $F$, the Riemannian Jacobian $JF$ and its image at $p\in M$ are respectively denoted by
$$JF(p):=({\nabla}f_i(p))_{i\in I}\quad \mbox{and}\quad{\rm Im}JF(p):=\{JF(p)(v): 
v\in T_pM\},$$
where
\begin{equation}\label{Jocobi}
JF(p)(v):=(\langle{\nabla}f_i(p),v\rangle)_{i\in I}. 
\end{equation}

In the remainder of this paper, we always assume that  $F:=(f_i)_{i\in I}:M\rightarrow\IR^n$ is   continuously differentiable. The vector  optimization problem considered in the present paper is denoted by
\begin{equation}\label{MP}
\min_{p\in M}F(p).
\end{equation} 
Recall that a point $p\in M$ is called a (globally) Pareto (resp. weak Pareto) optimum of \eqref{MP} if there dose not  exist  other point $q\in M$ such that
\begin{equation}\label{Pareto-optimum}
F(q)\preceq F(p)\mbox{ (resp. $\prec$)}\quad\mbox{and}\quad F(q)\neq F(p)
\end{equation}
(see, e.g., \cite{Fliege2000M,HuCWLYSiam} in Euclidean space settings). 
Furthermore, a point $p\in M$ is called a locally Pareto (resp. weak Pareto) optimum of \eqref{MP} if there exists a neighborhood $U\subset M$ of $p$ such that there dose not exist  other point $q\in U\setminus\{p\}$ satisfying \eqref{Pareto-optimum}.


Recall from \cite{Bento2012M,Bento2013M}, that a point $p\in M$ is called a Pareto critical point of $F$ if the image of $JF(p)$ satisfies
\begin{equation*}\label{C-Pareto-P}
{\rm Im}(JF(p))\cap(-\IR^n_{++})=\emptyset.
\end{equation*}
By definition, each (locally) Pareto optimum of $F$ is a Pareto critical point of $F$.


Let $p\in M$ and assume that it is not a Pareto critical point of $F$. By definition, there exists a direction $v\in T_pM$ satisfying
$JF(p)(v)\in-\IR^n_{++}$,
that is, $v$ is a descent direction at $p$. We shall give some notation related to the descent directions of $F$ at $p$. As done in \cite{Bento2013M}, 
we consider the following unconstrained optimization problem on $T_pM$:
\begin{equation}\label{OP-DD}
\min_{v\in T_pM}\alpha_p(v):=\max_{i\in I}\langle\nabla f_i(p),v\rangle+\frac12\|v\|^2.
\end{equation}
Noting that $\alpha_p$ is strongly convex on $T_pM$, problem \eqref{OP-DD} has a unique solution. The solution of problem \eqref{OP-DD} and the associated value are denoted by 
$v(p)$ and $\alpha_p^*$ respectively,
that is,
\begin{equation}\label{Def-vp}
v(p):={\rm argmin}_{v\in T_pM}\alpha_p(v),\quad\alpha_p^*:=\alpha_p(v(p)).
\end{equation}
As pointed out in \cite{Bento2013M}, the vector $v(p)$ is in fact a descent direction at $p$ and always called the steepest descent direction at $p$. Furtheremore, we need the concept of the $\sigma$-approximate steepest descent direction, which can be found in \cite[Definition 4.2]{Bento2013M} (see also \cite[Definition 3.4]{Drummond2005M} for the Euclidean space version).

\begin{definition}\label{simaspdd}
Let $\sigma\in[0,1)$. A vector $v_p\in T_pM$ is said to be a $\sigma$-approximate steepest
descent direction at $p$ if it satisfies
$
\alpha_p(v_p)\le(1-\sigma)\alpha_p^*.
$
\end{definition}

For convenience, for any $p\in M$ and $\sigma\in[0,1)$, we use $D_{\sigma}(p)$ to denote the set of all $\sigma$-approximate steepest
descent direction at $p$. It is clear that for any $\sigma\in[0,1)$, $v(p)\in D_{\sigma}(p)$. The following lemma shows some properties related to the (approximate) steepest descent directions.


\begin{proposition}\label{simaspddp}
Let $p\in M$. The following assertions hold:

{\rm (i)} $v(p)=0$ (or $\alpha_p^*=0$) 
if and only if $p$ is a
Pareto critical point.

{\rm (ii)} There exist $\{\lambda_i: i\in I(p)\}\subset[0,1]$ with $\sum_{i\in I(p)}\lambda_i=1$, such that 
\begin{equation}\label{vpo}
v(p)=-\sum_{i\in I(p)}\lambda_i\nabla f_i(p), 
\end{equation}
where $I(p):=\{i\in I: \langle \nabla f_i(p), v(p)\rangle=\max_{j\in I} \langle \nabla f_j(p), v(p)\rangle\}$;
and the function: $M\ni p \mapsto v(p)\in T_pM$
is continuous on $M$.

{\rm (iii)} If $p\in M$ is not a Pareto critical point
and $v_p\in D_{\sigma}(p)$, 
then there holds
\begin{equation}\label{Def-gps}
\alpha_p(v_p):=\max_{i\in I}\langle\nabla f_i(p),v_p\rangle+\frac{1}{2}\|v_p\|^2<0,
\end{equation}
which particularly implies that $v_p$ is a descent direction. Furthermore, the following relation holds:
\begin{equation}\label{Def-gpss}
\|v_p\|\ge (1-\sqrt{\sigma})\|v(p)\|.
\end{equation}

{\rm (iv)} Let $p\in M$ be a Pareto critical point. Then, for any $\varepsilon>0$, there exists $\delta>0$ such that 
\begin{equation}\label{vpn}
\|v_q\|\le\varepsilon\quad\mbox{for any}\quad v_q\in D_{\sigma}(q), q\in \IB(p,\delta).
\end{equation}

\end{proposition}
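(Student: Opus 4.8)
The plan is to treat the four assertions in the given order, since (iii) and (iv) will reuse the explicit description of $v(p)$ produced in (ii).

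For (i), I would first record that $\alpha_p(0)=0$, so $\alpha_p^*\le 0$ always, and that strong convexity forces the minimizer to be unique; hence $v(p)=0$ is equivalent to $\alpha_p^*=\alpha_p(0)=0$. Next I would translate Pareto criticality into the statement that $\max_{i\in I}\langle\nabla f_i(p),v\rangle\ge 0$ for every $v\in T_pM$ (no $v$ makes all the inner products negative). If $p$ is Pareto critical this gives $\alpha_p(v)\ge\tfrac12\|v\|^2\ge 0$, so the value $0$ is attained only at $v=0$, whence $v(p)=0$. Conversely, if $p$ is not critical, pick $v$ with $\langle\nabla f_i(p),v\rangle<0$ for all $i$; then $\alpha_p(tv)<0$ for small $t>0$, so $\alpha_p^*<0$ and $v(p)\neq 0$.

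For (ii), for the representation I would write $\max_{i\in I}\langle\nabla f_i(p),v\rangle=\max_{\lambda\in\Delta}\sum_{i}\lambda_i\langle\nabla f_i(p),v\rangle$ over the unit simplex $\Delta$ and apply the first-order optimality condition $0\in\partial\alpha_p(v(p))$ for the strongly convex $\alpha_p$; by Danskin's theorem the subdifferential of the max-term at $v(p)$ is the convex hull of $\{\nabla f_i(p):i\in I(p)\}$, which yields $-v(p)=\sum_{i\in I(p)}\lambda_i\nabla f_i(p)$ with $\lambda_i\ge 0$, $\sum\lambda_i=1$, i.e. \eqref{vpo}. For continuity I would take $p_k\to p$ and compare vectors by parallel transport into $T_pM$ along the (unique, for large $k$) minimal geodesics: the bound $\alpha_{p_k}(v(p_k))\le 0$ gives $\|v(p_k)\|\le 2\max_i\|\nabla f_i(p_k)\|$, so $\{v(p_k)\}$ is bounded, and any cluster point $\bar v$ satisfies $\alpha_p(\bar v)\le\alpha_p(w)$ for all $w$ by continuity of the $\nabla f_i$, hence $\bar v=v(p)$ by uniqueness, so the whole sequence converges. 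I expect this continuity step, rather than the algebraic representation, to be the main obstacle, precisely because $v(p)$ is a section of the tangent bundle and the comparison of vectors living in different tangent spaces must be made rigorous via a local trivialization or parallel transport (both legitimate since the $\nabla f_i$ are continuous).

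For (iii), the key preliminary identity is $\alpha_p^*=-\tfrac12\|v(p)\|^2$, which I would read off from \eqref{vpo}: pairing $v(p)=-\sum_{i\in I(p)}\lambda_i\nabla f_i(p)$ with $v(p)$ shows the common active value $\max_i\langle\nabla f_i(p),v(p)\rangle=-\|v(p)\|^2$, and substitution into $\alpha_p$ gives the identity. Since $p$ is not critical, (i) yields $\alpha_p^*<0$, so $\alpha_p(v_p)\le(1-\sigma)\alpha_p^*<0$, which is \eqref{Def-gps}; as $\max_i\langle\nabla f_i(p),v_p\rangle\le\alpha_p(v_p)<0$ forces every $\langle\nabla f_i(p),v_p\rangle<0$, the vector $v_p$ is a descent direction. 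For \eqref{Def-gpss} I would combine the lower estimate $\alpha_p(v_p)\ge-\langle v(p),v_p\rangle+\tfrac12\|v_p\|^2$ (from $\max_i\langle\nabla f_i(p),v_p\rangle\ge\sum_{i\in I(p)}\lambda_i\langle\nabla f_i(p),v_p\rangle=-\langle v(p),v_p\rangle$) with the upper bound $(1-\sigma)\alpha_p^*=-\tfrac{1-\sigma}2\|v(p)\|^2$. Writing $a=\|v_p\|$, $b=\|v(p)\|$ and using Cauchy--Schwarz, this reduces to $a^2-2ab+(1-\sigma)b^2\le 0$, whose roots are $b(1\pm\sqrt\sigma)$, so $a\ge(1-\sqrt\sigma)b$, which is \eqref{Def-gpss}.

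For (iv), I would observe that the very same quadratic inequality $a^2-2ab+(1-\sigma)b^2\le 0$ also gives the companion upper bound $\|v_q\|\le(1+\sqrt\sigma)\|v(q)\|$, valid at every $q$ and for every $v_q\in D_\sigma(q)$ (and trivially when $v(q)=0$). Since $p$ is Pareto critical, (i) gives $v(p)=0$, and the continuity established in (ii) gives $\|v(q)\|\to 0$ as $q\to p$; hence, given $\varepsilon>0$, choosing $\delta$ so that $\|v(q)\|\le\varepsilon/(1+\sqrt\sigma)$ for all $q\in\IB(p,\delta)$ delivers $\|v_q\|\le\varepsilon$, which is \eqref{vpn}.
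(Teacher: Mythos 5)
Your proposal is correct, and it diverges from the paper in an interesting way. For assertion (iii) you follow essentially the paper's own route: the identity $\alpha_p^*=-\tfrac12\|v(p)\|^2$ is extracted from the representation \eqref{vpo} exactly as in the paper, and your chain $-\langle v(p),v_p\rangle+\tfrac12\|v_p\|^2\le(1-\sigma)\alpha_p^*$ is the paper's; the only cosmetic difference is that the paper completes the square to get $\|v(p)-v_p\|^2\le\sigma\|v(p)\|^2$ and then uses the reverse triangle inequality, while you apply Cauchy--Schwarz and analyze the roots of $a^2-2ab+(1-\sigma)b^2\le0$ --- equivalent computations. For (i) and (ii) the paper gives no proof at all, simply citing Lemmas 4.1 and 4.2 of Bento et al.; your self-contained arguments (the sign argument via $\alpha_p(v)\ge\tfrac12\|v\|^2$ under criticality, Danskin plus first-order optimality for \eqref{vpo}, and the boundedness--cluster-point--uniqueness scheme with parallel transport for continuity) are sound, and you are right that the continuity step is where the real work hides. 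The genuine divergence is in (iv): the paper argues directly from criticality of $p$, parallel-transporting the candidate direction to $T_pM$ and using a uniform gradient-perturbation estimate (its condition \eqref{dvp}) to squeeze $0\ge\alpha_q(v_q)\ge-\tfrac{\varepsilon^2}2+\tfrac12\|v_q\|^2$; you instead note that the same quadratic inequality from (iii) holds at every $q$ and yields the companion bound $\|v_q\|\le(1+\sqrt\sigma)\|v(q)\|$, then invoke $v(p)=0$ from (i) and the continuity of $v(\cdot)$ from (ii). Your route is arguably cleaner: it sidesteps the uniformity-in-$v$ issue implicit in \eqref{dvp} (which, as stated in the paper, really requires $v$ to range over a bounded set), at the price of leaning on the continuity of the steepest-descent field, which the paper only imports by citation; the paper's route, conversely, is self-contained modulo that citation and makes no use of (ii) in proving (iv).
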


\begin{proof}
Assertions (i)-(ii) are known in \cite[Lemmas 4.1, 4.2]{Bento2013M}.
To show assertion (iii), suppose that $p$ is not a Pareto critical point. Then, we see from assertion (i) that $v(p)\neq 0$. First, we show that
\begin{equation}\label{vpnap}
\alpha_p^*=-\frac{1}{2}\|v(p)\|^2.
\end{equation}
Granting this, we get that $\alpha^*_p<0$, and so \eqref{Def-gps} is valid by recalling $\alpha(v_p)\le(1-\sigma)\alpha_p^*$. To show \eqref{vpnap}, by definition of the subindex $I(p)$, there holds
\begin{equation}\label{vpnapv}
\langle \nabla f_i(p), v(p)\rangle=\max_{i\in I} \langle \nabla f_i(p), v(p)\rangle\quad\mbox{for each }i\in I(p).
\end{equation}
Note by \eqref{vpo} that there exist $\{\lambda_i: i\in I(p)\}\subset[0,1]$ with $\sum_{i\in I(p)}\lambda_i=1$ such that $v(p)=-\sum_{i\in I(p)}\lambda_i\nabla f_i(p)$. Then, there holds:
$$
-\|v(p)\|^2=\langle \sum_{i\in I(p)}\lambda_i\nabla f_i(p), v(p)\rangle=\sum_{i\in I(p)}\lambda_i\langle \nabla f_i(p), v(p)\rangle.
$$
In view of \eqref{vpnapv}, we get that $\max_{i\in I} \langle \nabla f_i(p), v(p)\rangle=-\|v(p)\|^2$, and so \eqref{vpnap} holds by definition. Letting $v_p\in D_{\sigma}(p)$, we estimate that
$$
\begin{array}{lll}
-\langle v(p), v_p\rangle+\frac12\|v_p\|^2&=\langle \sum_{i\in I(p)}\lambda_i\nabla f_i(p), v_p\rangle+\frac12\|v_p\|^2\\
&\le \max_{i\in I} \langle \nabla f_i(p), v_p\rangle+\frac12\|v_p\|^2\\
&\le(1-\sigma)\alpha_p^*=-\frac12(1-\sigma)\|v(p)\|^2,
\end{array}
$$
where the last equality is by \eqref{vpnap}.
Then, we have that
$\|v(p)-v_p\|^2\le \sigma\|v(p)\|^2$. This implies \eqref{Def-gpss}, and so
assertion (iii) is shown.

To show assertion (iv), let $p$ be a Pareto critical point. Then, by definition, for any $v\in T_pM$, there exists an index $i_v\in I$ such that $\langle \nabla f_{i_v}(p),v\rangle\ge0 
$.
Since $F$ is continuously differentiable, for any $\varepsilon>0$, there exists $\delta>0$ such that
\begin{equation}\label{dvp}\begin{array}{ll}& \mbox{for any $v\in T_pM$, there exists a index $i_v\in I$ satisfying }\\
&\langle \nabla f_{i_v}(q),P_{q,p}v\rangle\ge-\frac{\varepsilon^2}{2}\quad\mbox{for any}\quad q\in\IB(p,\delta).\end{array}
\end{equation}
Fix $q\in \IB(p,\delta)$ and $v_q\in D_{\sigma}(q)$. Then $P_{p,q}v_q\in T_pM$ and it follows from \eqref{dvp} that there exits $i_{v_q}\in I$ satisfying $\langle \nabla f_{i_{v_q}}(q),P_{q,p}P_{p,q}v_q\rangle\ge-\frac{\varepsilon^2}{2}$.
Thus, in view  of  $v_q\in D_{\sigma}(q)$, we get by definition that  that
$$
0\ge(1-\sigma)\alpha_q^*\ge\alpha_q(v_q)\ge \langle \nabla f_{i_{v_p}}(q),v_q\rangle+\frac12\|v_q\|^2\ge -\frac{\varepsilon^2}{2}+\frac12\|v_q\|^2,
$$
which shows \eqref{vpn}, completing the proof.
\end{proof}

We end this subsection by recalling the concept of  s-compatible  decent direction  at $p$. Recall from \cite[Definition 3.4]{Drummond2005M} (see also \cite{Bento2013M}, where the authors used the notion of the compatible scalarization) that a vector $v\in T_pM$ is said to be {\it s-compatible } at $p$ if there exist
$\{\alpha_i^p: i\in I\}\subset[0,1]$ with $\sum_{i\in I}\alpha_i^p=1$ such that
\begin{equation}\label{lemma2-2}
v=-\sum_{i\in I}\alpha_i^p\nabla f_i(p).
\end{equation}

\subsection{Inexact descent algorithm with  general step sizes for multiobjective optimizations}\ \

Below, we propose an  inexact descent algorithm employing general step sizes for solving problem \eqref{MP}. 

\begin{algorithm}\label{SDAA} (Inexact descent algorithm with  general step sizes)\label{DA}

\noindent {\textbf{Step 0}}. Select $p_{0}\in M$, $\sigma,\beta\in(0,1)$, $R\in[1,+\infty)$ and set $k:=0$.

\noindent {\textbf{Step 1}}. If $p_k$ is a Pareto critical point, then stop;
otherwise select 
$v_k\in D_{\sigma}(p_k)$ and construct the geodesic
$\gamma_{k}$ such that
\begin{equation}\label{GDA-1}
 \gamma_{k} (0)=p_{k}\quad\mbox{and}\quad
 \gamma'_k(0)=v_k.
\end{equation}

\noindent {\textbf{Step 2}}. Select the step size $t_k\in(0,R]$ which  satisfies the following inequality:
\begin{equation}\label{GDA-2}
F(\gamma_k(t_k))\preceq F(p_k)+\beta t_k JF(p_k)(v_k).
\end{equation}

\noindent {\textbf{Step 3}}. Set $p_{k+1}:=\gamma_{k}(t_{k})$, replace $k$ by $k+1$ and go to step 1.
\end{algorithm}

Recall that Algorithm \ref{DA} is said to be well defined if for each $k\in\IN$, there always exists $t_k\in (0,1]$ satisfying
\eqref{GDA-2} in Step 2. Let $\nu\in(0,1)$. Algorithm \ref{DA} is said to employ 
the (generalized) Armijo step sizes (cf. \cite{Bento2013M}) if each step size $t_k$ in Step 2 is chosen by
\begin{equation}\label{GDA-2-A}
t_k:=\max\{\nu^{-i}:i\in\IN,\; F(\gamma_k(\nu^{-i}))\preceq F(p_k)+\beta\nu^{-i} JF(p_k)(v_k)\}.
\end{equation}


Define a mapping $\varphi:M\to  \IR$ by  \begin{equation}\label{eq-varp}\varphi(p):= \sup_{q\in M}\min_{i\in I}(f_i(p)-f_i(q))\quad\mbox{for each } p\in M.\end{equation}
Clearly, $\varphi(p)\ge 0$ for each  $p\in M$ and $\varphi(p)=0$ if and only if $p$ is a weak   Pareto  optimum  of \eqref{MP}. 
Moreover, the following lemma quantifies some properties of the function $\varphi$.

\begin{proposition}\label{phi-p}

{\rm (i)} $\varphi$ is locally Lipschitz continuous on $M$, that is, 
for each $\bar p\in M$,     there exit $\delta>0$ and  $L>0$ such that the function $\varphi$ is Lipschitz continuous on $\IB(\bar p,\delta)$ with modulus $L$:
$$
|\varphi(p)-\varphi(p')\|\le L{\rm d}(p,p')\quad\mbox{ for each } p,p'\in \IB(\bar p,\delta).
$$

{\rm(ii)} Let $p,q\in M$. If $ F(q)\preceq F(p)$, then $\varphi(q)\le\varphi(p)$.

{\rm (iii)} Let $\{p_k\}$ (together with associated sequences  $\{t_k\}$, $\{v_k\}$) be a sequence generated by Algorithm \ref{DA}. 
Then, we have the following estimate
\begin{equation}\label{vk}
\frac{\beta t_k}{2}\|v_k\|^2\le \varphi(p_{k})-\varphi(p_{k+1})\quad\forall k\in\IN.
\end{equation}
\end{proposition}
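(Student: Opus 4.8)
The three assertions are independent, and I would establish them in the order (ii), (i), (iii), since each rests on a single elementary stability property of $\min$ and $\sup$ combined with the earlier results. For (ii), the point is monotonicity of the pointwise minimum: the hypothesis $F(q)\preceq F(p)$ means $f_i(q)\le f_i(p)$ for every $i\in I$, so for an arbitrary $q'\in M$ one has $f_i(q)-f_i(q')\le f_i(p)-f_i(q')$ for all $i$, whence $\min_{i}(f_i(q)-f_i(q'))\le\min_i(f_i(p)-f_i(q'))$. Taking the supremum over $q'\in M$ on both sides yields $\varphi(q)\le\varphi(p)$, which is exactly the claim.

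For (i), I would use two elementary inequalities: first, $|\min_i a_i-\min_i b_i|\le\max_i|a_i-b_i|$ for finite families of reals, and second, $|\sup_q g(q)-\sup_q h(q)|\le\sup_q|g(q)-h(q)|$. Applying the first with $a_i=f_i(p)-f_i(q)$ and $b_i=f_i(p')-f_i(q)$, so that $a_i-b_i=f_i(p)-f_i(p')$ is independent of $q$, gives for each fixed $q$ the bound $|\min_i(f_i(p)-f_i(q))-\min_i(f_i(p')-f_i(q))|\le\max_i|f_i(p)-f_i(p')|$; then the second inequality, applied to $g(q)=\min_i(f_i(p)-f_i(q))$ and $h(q)=\min_i(f_i(p')-f_i(q))$, transfers this bound to the two suprema and yields $|\varphi(p)-\varphi(p')|\le\max_{i\in I}|f_i(p)-f_i(p')|$. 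Since each $f_i$ is continuously differentiable, it is Lipschitz on some ball $\IB(\bar p,\delta)$ with modulus $L_i$; taking $L:=\max_{i\in I}L_i$ then gives $|\varphi(p)-\varphi(p')|\le L\,{\rm d}(p,p')$ on $\IB(\bar p,\delta)$, as required.

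For (iii), I would combine the decrease condition \eqref{GDA-2} with the descent estimate of Proposition \ref{simaspddp}(iii). Writing $p_{k+1}=\gamma_k(t_k)$, inequality \eqref{GDA-2} reads componentwise $f_i(p_{k+1})\le f_i(p_k)+\beta t_k\langle\nabla f_i(p_k),v_k\rangle$ for every $i$. By \eqref{Def-gps} we have $\max_{i\in I}\langle\nabla f_i(p_k),v_k\rangle\le-\frac12\|v_k\|^2$, so every inner product above is at most $-\frac12\|v_k\|^2$, giving $f_i(p_{k+1})-f_i(q)\le (f_i(p_k)-f_i(q))-\frac{\beta t_k}{2}\|v_k\|^2$ for all $i\in I$ and all $q\in M$. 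Since the subtracted quantity is independent of $i$, taking $\min_i$ and then $\sup_q$ exactly as in (ii) produces $\varphi(p_{k+1})\le\varphi(p_k)-\frac{\beta t_k}{2}\|v_k\|^2$, which is \eqref{vk}.

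The computations are routine; the only genuine subtlety is that these manipulations silently presuppose that $\varphi$ is finite on the relevant region, since the difference-of-suprema inequality is meaningful only once the suprema are finite. I would address this by observing that the very bound proved in (i) shows the local oscillation of $\varphi$ is controlled by that of the $f_i$, so finiteness propagates across connected balls once it holds at a single point, which the algorithmic context supplies. The remaining work is purely bookkeeping: keeping the direction of each $\min$/$\sup$ inequality correct and recording that $\frac{\beta t_k}{2}\|v_k\|^2\ge0$, so that \eqref{vk} indeed expresses a monotone decrease of $\varphi$ along the iterates.
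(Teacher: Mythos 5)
Your proof is correct and follows essentially the same route as the paper's: assertion (ii) by direct monotonicity of $\min$ and $\sup$ (which the paper dismisses as ``clearly by definition''), assertion (i) by reducing the oscillation of $\varphi$ to $\max_{i\in I}|f_i(p)-f_i(p')|$ and invoking local Lipschitz continuity of each $f_i$, and assertion (iii) by inserting the descent condition \eqref{GDA-2} and the bound $\max_{i\in I}\langle\nabla f_i(p_k),v_k\rangle\le-\frac12\|v_k\|^2$ from \eqref{Def-gps} into the $\sup$--$\min$ defining $\varphi$, exactly as in the paper's chain \eqref{sequen-ineq-2}. Your closing remark on the finiteness of $\varphi$ is a point the paper passes over silently (its definition $\varphi\colon M\to\IR$ implicitly assumes it), and your observation that the bound in (i) propagates finiteness is a legitimate way to discharge it.
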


\begin{proof}

(i). Let  $\bar p\in M$.
Noting that $F$ is continuously differentiable, there exist  $\delta>0$ and  $L>0$ such that
$$
|f_i(p)-f_i(p')\|\le L{\rm d}(p,p')\quad\mbox{ for each } p,p'\in \IB(\bar p,\delta) \;\mbox{ and for each }i\in I.
$$
Fix $p,p'\in \IB(\bar p,\delta)$. Then, it follows that
$$
 f_i(p)-f_i(q)\le f_i(p')-f_i(q)+ L{\rm d}(p,p')\quad\mbox{ for each }  i\in I \;\mbox{ and for each } q\in M,
$$   which implies that
$$
\sup_{q\in M}\min_{i\in I} (f_i(p)-f_i(q))\le \sup_{q\in M}\min_{i\in I} (f_i(p')-f_i(q))+ L{\rm d}(p,p'),
$$ that is, $$ \varphi(p)-\varphi(p')\le L{\rm d}(p,p').$$ With similar technique, we can also check  that
$$ \varphi(p')-\varphi(p)\le L{\rm d}(p,p').$$ Hence, it follows that $$\|\varphi(p)-\varphi(p')\|\le L{\rm d}(p,p'),$$ showing assertion (i).


(ii). It's clearly by definition.

(iii).
By the definition of function $\varphi$, one has that
\begin{equation}\label{sequen-ineq-2}
\begin{array}{lllll}
\varphi(p_{k+1})
 &=\sup_{q\in M}\min_{i\in I}(f_i(p_{k+1})-f_i(q))\\
&\le \sup_{q\in M}\min_{i\in I}(f_i(p_k)+\beta t_k\nabla f_i(p_k)^Tv_k-f_i(q))\\
&\le \sup_{q\in M}\min_{i\in I}(f_i(p_k)-\frac{\beta t_k}{2}\|v_k\|^2-f_i(q))\\
&= -\frac{\beta t_k}{2}\|v_k\|^2+\varphi(p_k),\\
\end{array}
\end{equation}
where the first inequality is by \eqref{GDA-2} and the second inequality thanks to $\nabla f_i(p_k)^Tv_k\le -\frac12\|v_k\|^2$ by \eqref{Def-gps}. 
Hence, \eqref{vk} is seen to hold, completing the proof.

\end{proof}

The following proposition  is about some useful properties of sequence $\{p_k\}$ (together with $\{t_k\}$ and $\{v_k\}$) generated by Algorithm \ref{SDAA},  which includes the partial convergence result for Algorithm \ref{DA} (see assertion (iii) below), while assertion (ii) improves the corresponding results in \cite[Theorem 5.1(i)]{Bento2013M} where \eqref{GA-P1} holds under the assumption that $\{p_k\}$ has a  cluster point.

\begin{proposition}\label{PC}
Algorithm \ref{DA} is well defined and each sequence $\{p_k\}$ generated by Algorithm \ref{SDAA} has the following properties:

{\rm (i)} $\{F(p_k)\}$ is non-increasing monotonically and for any $k\in\IN$:
\begin{equation}\label{D-tpp}
\mbox{${\rm d}(\gamma_k(t),p_k)
\le t\,\|v_k\|$\quad for any $t\in [0,t_k]$}.
\end{equation}

{\rm (ii)}
\begin{equation}\label{GA-P1}
\sum_{k\in\IN} t_k^2\|v_k\|^2<+\infty.
\end{equation}

 {\rm (iii)} If    $\{t_k\}$  has  a positive lower bound or  that $\{t_k\}$  satisfies the Armijo step sizes, then each cluster point of the sequence $\{p_k\}$ is a Pareto critical point of $F$.
\end{proposition}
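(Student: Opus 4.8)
The plan is to establish the four assertions (well-definedness, (i), (ii), (iii)) in turn, reusing the descent estimates of Proposition \ref{simaspddp} and the energy-type estimate \eqref{vk} of Proposition \ref{phi-p}. I would first record the basic fact that whenever $p_k$ is not a Pareto critical point the chosen $v_k\in D_\sigma(p_k)$ satisfies, by \eqref{Def-gps}, $\langle\nabla f_i(p_k),v_k\rangle\le\max_{j\in I}\langle\nabla f_j(p_k),v_k\rangle<-\tfrac12\|v_k\|^2<0$ for every $i\in I$; in particular $JF(p_k)(v_k)\prec 0$. For well-definedness I would set $g_i(t):=f_i(\gamma_k(t))-f_i(p_k)-\beta t\langle\nabla f_i(p_k),v_k\rangle$ and observe that $g_i(0)=0$ and $g_i'(0)=(1-\beta)\langle\nabla f_i(p_k),v_k\rangle<0$, so each $g_i$ is negative on some $(0,\bar t_i]$; taking the (finite) minimum $\bar t:=\min_{i\in I}\bar t_i>0$ yields \eqref{GDA-2} for all $t\in(0,\min\{\bar t,1\}]$, which in particular guarantees that the Armijo backtracking \eqref{GDA-2-A} terminates. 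Assertion (i) then follows at once: feeding $t_k$ into \eqref{GDA-2} and using $JF(p_k)(v_k)\prec0$ gives $F(p_{k+1})\preceq F(p_k)$; and since $\gamma_k$ is a geodesic it has constant speed $\|\gamma_k'(t)\|=\|v_k\|$, so ${\rm d}(\gamma_k(t),p_k)$ is bounded by the arc length $\int_0^t\|\gamma_k'(s)\|\,ds=t\|v_k\|$, which is exactly \eqref{D-tpp}.

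For (ii) I would telescope \eqref{vk}: summing $\tfrac{\beta t_k}{2}\|v_k\|^2\le\varphi(p_k)-\varphi(p_{k+1})$ over $k=0,\dots,N$ and using $\varphi\ge0$ together with the finiteness of $\varphi(p_0)$ gives $\sum_{k=0}^{N}\tfrac{\beta t_k}{2}\|v_k\|^2\le\varphi(p_0)-\varphi(p_{N+1})\le\varphi(p_0)<+\infty$, hence $\sum_{k\in\IN}t_k\|v_k\|^2<+\infty$. The extra factor in \eqref{GA-P1} is recovered from the constraint $t_k\le R$ built into Step 2, via $t_k^2\|v_k\|^2\le R\,t_k\|v_k\|^2$; summing yields \eqref{GA-P1}, and in particular $t_k\|v_k\|\to0$ along the whole sequence.

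The substance is assertion (iii); here I would let $\bar p$ be a cluster point with $p_{k_j}\to\bar p$ and split into two cases. If $\{t_k\}$ has a positive lower bound $\underline t$, then $t_k\|v_k\|\to0$ forces $\|v_k\|\to0$, whence by \eqref{Def-gpss} $\|v(p_k)\|\le(1-\sqrt\sigma)^{-1}\|v_k\|\to0$; continuity of $p\mapsto v(p)$ (Proposition \ref{simaspddp}(ii)) then gives $v(\bar p)=\lim_j v(p_{k_j})=0$, so $\bar p$ is Pareto critical by Proposition \ref{simaspddp}(i). If instead $\{t_k\}$ is the Armijo rule, I would argue by contradiction: assuming $\bar p$ not critical, continuity gives $\|v(\bar p)\|>0$ and $\|v_{k_j}\|\ge(1-\sqrt\sigma)\|v(p_{k_j})\|$ is bounded away from $0$, so $t_{k_j}\|v_{k_j}\|\to0$ forces $t_{k_j}\to0$. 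Then for large $j$ the enlarged trial step $s_j:=t_{k_j}/\nu\to0$ must violate \eqref{GDA-2} by the maximality in \eqref{GDA-2-A}, so some index $i_j$ satisfies $f_{i_j}(\gamma_{k_j}(s_j))>f_{i_j}(p_{k_j})+\beta s_j\langle\nabla f_{i_j}(p_{k_j}),v_{k_j}\rangle$; passing to a sub-subsequence on which $i_j\equiv i^\ast$ is constant, dividing by $s_j$, and applying the mean value theorem to $t\mapsto f_{i^\ast}(\gamma_{k_j}(t))$ gives $\langle\nabla f_{i^\ast}(\gamma_{k_j}(\xi_j)),\gamma_{k_j}'(\xi_j)\rangle\ge\beta\langle\nabla f_{i^\ast}(p_{k_j}),v_{k_j}\rangle$ for some $\xi_j\in(0,s_j)$. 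Taking $j\to\infty$ and using a limit $\bar v$ of $\{v_{k_j}\}$ should produce $(1-\beta)\langle\nabla f_{i^\ast}(\bar p),\bar v\rangle\ge0$, contradicting the strict inequality $\max_{i}\langle\nabla f_i(\bar p),\bar v\rangle<0$ that holds because $\bar v\in D_\sigma(\bar p)$ and $\bar p$ is not critical.

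The main obstacle is the limiting argument on the manifold in the Armijo case: the vectors $v_{k_j}$ and the velocities $\gamma_{k_j}'(\xi_j)$ live in different tangent spaces $T_{p_{k_j}}M$, so ``passing to a limit $\bar v\in T_{\bar p}M$'' must be made precise. I would first show $\{v_{k_j}\}$ is bounded (from $\alpha_{p_{k_j}}(v_{k_j})\le0$ one gets $\tfrac12\|v_{k_j}\|\le\max_i\|\nabla f_i(p_{k_j})\|$, which stays bounded since $F$ is $C^1$ and $p_{k_j}\to\bar p$), and then work in normal coordinates around $\bar p$ (equivalently, transport everything through the smooth exponential map), so that $v_{k_j}$, $\gamma_{k_j}(\xi_j)\to\bar p$, and $\gamma_{k_j}'(\xi_j)$ all converge as $\xi_j\to0$. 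A second, subtler point is verifying $\bar v\in D_\sigma(\bar p)$: this follows by taking limits in $\alpha_{p_{k_j}}(v_{k_j})\le(1-\sigma)\alpha^\ast_{p_{k_j}}$, which requires continuity of $\alpha^\ast_{(\cdot)}$, itself a consequence of the continuity of $v(\cdot)$ together with \eqref{vpnap}. The remaining steps are routine.
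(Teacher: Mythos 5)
Your proposal is correct, and for assertions (i), (ii) and the lower-bound case of (iii) it coincides with the paper's argument: the paper also gets (i) directly from Step 2 and the constant speed of geodesics, gets (ii) by telescoping \eqref{vk} (with the same tacit finiteness of $\varphi(p_0)$ that you also assume), and in the first case of (iii) uses \eqref{GA-P1} plus \eqref{Def-gpss} and the continuity of $v(\cdot)$ exactly as you do. Where you genuinely diverge is that the paper handles the two remaining pieces by citation: well-definedness is quoted from \cite[Proposition 4.1]{Bento2013M}, and the Armijo case of (iii) from \cite[Theorem 5.1(ii)]{Bento2013M}, whereas you prove both from scratch. Your well-definedness argument via $g_i(t)=f_i(\gamma_k(t))-f_i(p_k)-\beta t\langle\nabla f_i(p_k),v_k\rangle$ with $g_i'(0)=(1-\beta)\langle\nabla f_i(p_k),v_k\rangle<0$ is the standard one, and your contradiction argument for the Armijo case --- $t_{k_j}\to 0$, failure of the enlarged trial step $t_{k_j}/\nu$, a fixed index $i^*$ after a sub-subsequence, mean value theorem, and passage to a limit $\bar v\in D_\sigma(\bar p)$ with $\alpha_{\bar p}(\bar v)\le(1-\sigma)\alpha^*_{\bar p}<0$ --- is essentially the proof behind the cited result; you correctly identify and resolve the two real technical points it requires, namely convergence of $(p_{k_j},v_{k_j})$ and $\gamma'_{k_j}(\xi_j)$ in $TM$ (boundedness of $\|v_{k_j}\|$ from $\alpha_{p_{k_j}}(v_{k_j})\le 0$, plus smooth dependence of geodesics on initial data in normal coordinates) and the continuity of $\alpha^*_{(\cdot)}$ via \eqref{vpnap}. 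What your route buys is a self-contained proposition not resting on the external reference (and it silently repairs the paper's typo $\nu^{-i}$ in \eqref{GDA-2-A}, which only makes sense as $\nu^i$); what the paper's route buys is brevity. One cosmetic remark: in (ii) the paper's displayed estimate only yields $\sum_k t_k\|v_k\|^2<+\infty$, and like you it needs the extra observation $t_k^2\|v_k\|^2\le R\,t_k\|v_k\|^2$ to reach \eqref{GA-P1}; you state this step explicitly, which is an improvement in rigor rather than a difference in substance.
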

\begin{proof} The well definedness of
Algorithm \ref{DA}   follows   from  \cite[Proposition 4.1]{Bento2013M}.  By Steps 2 and 3 of Algorithm \ref{DA},  assertion (i) is clear.

By \eqref{vk}, one has
$$\sum_{j=0}^{k}\frac{\beta t_j}{2}\|v_j\|^2\le \varphi(p_{0})-\varphi(p_{k+1})<\varphi(p_{0})<+\infty,$$
 showing  assertion (ii).

To show  assertion (iii),
suppose that  $\{t_k\}$  has  a positive lower bound. Then, it follows from \eqref{GA-P1} that $\|v_k\|\to 0$.
Note by \eqref{Def-gpss}  $
\|v_k\|\ge (1-\sqrt{\sigma})\|v(p_k)\|.
$ Hence, one has that $v(p_k)\to 0$ which, together with Proposition \ref{simaspddp}(i) and (ii),  implies that each cluster point of the sequence $\{p_k\}$ is a Pareto critical point of $F$.
In the case when  $\{t_k\}$  satisfies the Armijo step sizes, the conclusion follows from \cite[Theorem 5.1(ii)]{Bento2013M}. The proof is complete.
\end{proof}

\section{Local convergence under locally quasi-convex assumption}\ \

This section is devoted to establishing local convergence of Algorithm \ref{SDAA}  under locally quasi-convex assumption. Firstly, we need   some  useful lemmas.




The inequality in the following lemma plays an important role in our study.
\begin{lemma}\label{basic-QC}
Let $Q\subseteq M$ be  weakly convex with nonempty interior, let $p\in {\rm int}Q$ and $v\in T_pM$ be a s-compatible vector at $p$.
Let $t\ge 0$ and
$\gamma:[0,+\infty)\rightarrow M$ be the geodesic satisfying
\begin{equation}\label{gsf0}
\gamma(0)=p,\quad \gamma'(0)=v\not=0\quad
\mbox{and} \quad \gamma([0,t])\subset {\rm int}Q.
\end{equation}
Suppose further that the  sectional curvatures on $Q$ are bounded from below by some  $\kappa< 0$, and that   $F$ is quasi-convex on $Q$.
Then the following inequality holds for any $q\in {\rm int}Q$ satisfying
$F(q)\preceq F(q)$:
\begin{equation}\label{basic-QC-1}
\begin{array}{ll}
{\rm d}^2(\gamma(t),q)<
{\rm d}^2(p,q)
 +\frac{3 t^2\|v\|^2}{2\hbar\left(\sqrt{|\kappa|} {\rm
d}(p,q)\right)}\quad\mbox{if }\sqrt{|\kappa|}t\|v\|\le 1.
\end{array}
\end{equation}
\end{lemma}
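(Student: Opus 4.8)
The plan is to reduce the statement to a one-dimensional comparison along $\gamma$, separating a nonpositive first-order contribution from a curvature-controlled second-order contribution. Write $\lambda:=\sqrt{|\kappa|}$, $a:={\rm d}(p,q)$, $b:=t\|v\|$ and $c:={\rm d}(\gamma(t),q)$, and let $\gamma_{pq}\in\Gamma^Q_{pq}$ be a minimal geodesic joining $p$ to $q$ (which exists and lies in $Q$ by weak convexity). First I would fix the sign of the angle at $p$: since $F(q)\preceq F(p)$ and $F$ is quasi-convex on $Q$, Lemma \ref{Prop1} yields $\langle\nabla f_i(p),\gamma_{pq}'(0)\rangle\le 0$ for every $i\in I$; as $v$ is $s$-compatible, by \eqref{lemma2-2} there are $\alpha_i^p\ge 0$ with $v=-\sum_{i\in I}\alpha_i^p\nabla f_i(p)$, whence
\[
\langle v,\gamma_{pq}'(0)\rangle=-\sum_{i\in I}\alpha_i^p\langle\nabla f_i(p),\gamma_{pq}'(0)\rangle\ge 0 ,
\]
so the angle $\theta$ at $p$ between $v$ and $\gamma_{pq}'(0)$ satisfies $\cos\theta\ge 0$. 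This is the only place where quasi-convexity of $F$, the relation $F(q)\preceq F(p)$ and the $s$-compatibility of $v$ enter, and it is exactly what will make the cross term below harmless.

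Next I would invoke the curvature comparison. Because the sectional curvatures on $Q$ are bounded below by $\kappa=-\lambda^2$ and $\gamma([0,t])\subset{\rm int}\,Q$, Toponogov's hinge comparison applies to the hinge at $p$ with sides $\gamma|_{[0,t]}$ (arclength $b$) and $\gamma_{pq}$ (length $a$) and vertex angle $\theta$; comparing with the model space of constant curvature $-\lambda^2$ gives
\[
\cosh(\lambda c)\le\cosh(\lambda a)\cosh(\lambda b)-\sinh(\lambda a)\sinh(\lambda b)\cos\theta .
\]
Since $\cos\theta\ge 0$, the last term is nonpositive and may be discarded, leaving $\cosh(\lambda c)\le\cosh(\lambda a)\cosh(\lambda b)$. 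The hypothesis $\lambda t\|v\|\le 1$ means $\lambda b\le 1$, so I would replace $\cosh(\lambda b)$ by its elementary majorant valid on $[0,1]$, namely $\cosh(\lambda b)\le 1+\tfrac34(\lambda b)^2$ (the constant $\tfrac34$ being the source of the factor $\tfrac32$ in the conclusion), obtaining $\cosh(\lambda c)-\cosh(\lambda a)\le\tfrac34\lambda^2 b^2\cosh(\lambda a)$.

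Finally I would convert this estimate for $\cosh$ into the asserted estimate for squared distances. The function $r\mapsto\cosh(\lambda\sqrt r)$ is convex on $[0,\infty)$, so linearizing at $r=a^2$ gives $\cosh(\lambda c)-\cosh(\lambda a)\ge\frac{\lambda\sinh(\lambda a)}{2a}(c^2-a^2)$; chaining this with the bound above and dividing by the positive coefficient yields
\[
c^2-a^2\le\frac{2a}{\lambda\sinh(\lambda a)}\cdot\tfrac34\lambda^2 b^2\cosh(\lambda a)=\tfrac32\,b^2\,\lambda a\coth(\lambda a)=\frac{3\,t^2\|v\|^2}{2\,\hbar(\lambda a)} ,
\]
where I have recalled $1/\hbar(\lambda a)=\lambda a\coth(\lambda a)$. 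The inequality is strict because $\cosh x<1+\tfrac34 x^2$ strictly on $(0,1]$ and $b=t\|v\|>0$, which gives \eqref{basic-QC-1}. The main obstacle is not any single step but the passage from the exact (transcendental) comparison to the clean polynomial bound \emph{evaluated at the initial distance} ${\rm d}(p,q)$: one must choose the polynomial majorant of $\cosh$ sharp enough to produce exactly the constant $\tfrac32$, and exploit the convexity of $r\mapsto\cosh(\lambda\sqrt r)$ (together with the monotonicity of $\coth$) so that the coefficient collapses to $1/\hbar(\lambda\,{\rm d}(p,q))$ rather than to a running-distance quantity. A secondary point needing care is the regularity/validity of the comparison, i.e. that $\gamma_{pq}$ is minimal and the whole hinge lies in ${\rm int}\,Q$, where the lower curvature bound is in force.
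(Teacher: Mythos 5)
Your proof is correct, but it takes a genuinely different route from the paper's. The paper's own proof is a two-line scalarization: writing $v=-\sum_{i\in I}\alpha_i^p\nabla f_i(p)$ as in \eqref{lemma2-2}, it sets $f:=\sum_{i\in I}\alpha_i^pf_i$, notes $\nabla f(p)=-v$ and $f(q)\le f(p)$, claims $f$ is quasi-convex on $Q$ via Proposition \ref{eq-convex}, and then cites the scalar inequality \cite[Lemma 2.5]{WLWYSIAM2019} verbatim, so that all of the geometry is outsourced to that reference. You instead prove the inequality from scratch: the angle condition $\langle v,\gamma_{pq}'(0)\rangle\ge0$ via Lemma \ref{Prop1} combined with $s$-compatibility, the hinge comparison to get $\cosh(\lambda c)\le\cosh(\lambda a)\cosh(\lambda b)$, and the two elementary estimates ($\cosh x<1+\frac34x^2$ strictly on $(0,1]$, and convexity of $r\mapsto\cosh(\lambda\sqrt r)$, both of which check out) to land exactly on \eqref{basic-QC-1}; your reverse-engineered $\hbar(s)=\tanh(s)/s$, which the paper never defines, is consistent with its later use $\hbar(1)>\frac34$ in \eqref{3.14-12r}. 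Your route buys two things: it is self-contained, and it bypasses the quasi-convex case of Proposition \ref{eq-convex}, which as stated is actually doubtful --- componentwise quasi-convexity of $F$ does not imply quasi-convexity of convex combinations (take $f_1(x)=x^3$, $f_2(x)=-x$ on $\IR$: each component is quasi-convex but $\frac12(f_1+f_2)$ is not). The paper's application is presumably repairable because the cited scalar lemma should only use quasi-convexity through the first-order condition $\langle\nabla f(p),\gamma_{pq}'(0)\rangle\le0$, which is precisely what you establish directly; your argument makes this explicit rather than relying on it tacitly.

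The one step you should firm up is the comparison theorem itself: Toponogov's hinge theorem is a global statement (curvature bounded below on all of $M$, one side of the hinge minimal), whereas here the bound $\kappa$ is assumed only on $Q$, so invoking it for a hinge merely contained in ${\rm int}\,Q$ needs justification --- you flag this yourself but do not resolve it. The standard local substitute, and very likely what is inside \cite[Lemma 2.5]{WLWYSIAM2019}, is Hessian comparison: since weak convexity of $Q$ puts a minimal geodesic from $q$ to each $\gamma(s)$ inside $Q$, where the curvature bound is in force, the function $\phi(s):=\cosh\bigl(\lambda\,{\rm d}(\gamma(s),q)\bigr)$ satisfies $\phi''\le\lambda^2\|v\|^2\phi$ (in the barrier sense at cut points), while $\phi'(0)\le0$ is exactly your angle condition; ODE comparison then yields $\phi(t)\le\phi(0)\cosh(\lambda\|v\|t)$, the same inequality you obtained from the hinge, after which your two elementary estimates finish the proof unchanged. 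Two degenerate cases are worth a remark: at $t=0$ (permitted by the statement) the inequality cannot be strict --- an inaccuracy of the paper's formulation (as is the typo $F(q)\preceq F(q)$, which you correctly read as $F(q)\preceq F(p)$) rather than of your argument --- and at $a={\rm d}(p,q)=0$ one should read $1/\hbar(0)=1$, with your linearization replaced by its limiting form.
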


\begin{proof}
 By assumption,
there exist
$\{\alpha_i^p: i\in I\}\subset[0,1]$ with $\sum_{i\in I}\alpha_i^p=1$ such that
$
v=-\sum_{i\in I}\alpha_i^p\nabla f_i(p).
$ Define a function $f:M\to \IR$ by $$f(\cdot):=\sum_{i\in I}\alpha_i^p  f_i(\cdot).$$ Then  $f$ is   quasi-convex (due to Proposition \ref{eq-convex}) and   differentiable on $Q$, $\nabla f(p)=-v$, $f(q)\le f(p)$.
Hence, \cite[Lemma 2.5]{WLWYSIAM2019} is applicable with $q,p$ in place of $z,x$ to concluding that \eqref{basic-QC-1} holds,
completing the proof.
\end{proof}

The following lemmas is known in \cite[Lemma 2.3]{WangOptim2018}. 

\begin{lemma}\label{lemma3}
Let $\{a_{k}\}$, $\{b_k\}\subset (0,+\infty)$ be two sequences satisfying
\begin{equation}\label{l-d0}
a_{k+1}\leq a_{k}(1+b_k)\quad\mbox{{\rm for all} $k\in\IN$},
\end{equation}
and $\sum_{k=0}^{\infty}b_k<\infty$. Then,
$\{a_{k}\}$ is convergent and so it is bounded.
\end{lemma}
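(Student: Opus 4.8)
The plan is to turn the one-sided multiplicative recursion \eqref{l-d0} into an honest monotonicity statement by dividing out the accumulated growth factors. The key observation is that the hypothesis $\sum_{k}b_k<\infty$ forces the infinite product of the factors $1+b_k$ to converge to a finite, strictly positive limit; once the $a_k$ are rescaled by the partial products of these factors, the inequality \eqref{l-d0} becomes a plain decrease, and a positive non-increasing sequence necessarily converges.

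Concretely, first I would introduce the partial products $P_k:=\prod_{j=0}^{k-1}(1+b_j)$ for $k\ge 1$, together with $P_0:=1$. Using the elementary bound $\log(1+b_j)\le b_j$ (equivalently $1+b_j\le e^{b_j}$), the sequence $\{\log P_k\}$ is non-decreasing and bounded above by $\sum_{j=0}^{\infty}b_j<\infty$; hence $\{P_k\}$ increases to a finite limit $P_\infty\in[1,+\infty)$, and in particular $P_\infty>0$.

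Next I would define the rescaled sequence $d_k:=a_k/P_k>0$. Dividing \eqref{l-d0} through by $P_{k+1}=P_k(1+b_k)$ yields
\[
d_{k+1}=\frac{a_{k+1}}{P_{k+1}}\le\frac{a_k(1+b_k)}{P_k(1+b_k)}=\frac{a_k}{P_k}=d_k,
\]
so $\{d_k\}$ is non-increasing and bounded below by $0$, and therefore converges to some $d_\infty\ge 0$. Finally, since $a_k=d_kP_k$ and both factors converge, I would conclude that $a_k\to d_\infty P_\infty$; thus $\{a_k\}$ is convergent and, being convergent, is bounded, as claimed.

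There is essentially no deep obstacle here, as this is a routine auxiliary lemma. The only genuine point to be careful about is conceptual: one must resist settling for the naive estimate $a_{k+1}\le a_0 P_\infty$, which delivers only boundedness. Because \eqref{l-d0} is merely an upper bound, it leaves $a_{k+1}$ free to drop arbitrarily and so does not by itself preclude oscillation; the rescaling by $P_k$ is exactly what converts the one-sided inequality into true monotonicity of $\{d_k\}$ and thereby upgrades boundedness to convergence.
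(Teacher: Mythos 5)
Your proof is correct and complete: the partial products $P_k=\prod_{j=0}^{k-1}(1+b_j)$ converge to a finite positive limit because $\sum_k b_k<\infty$, the rescaled sequence $d_k=a_k/P_k$ is positive and non-increasing by \eqref{l-d0}, and $a_k=d_kP_k$ then converges as a product of convergent sequences. The paper itself gives no proof of this lemma, simply citing \cite[Lemma 2.3]{WangOptim2018}; your rescaling argument is the standard proof of this quasi-monotonicity result, and your closing remark correctly identifies why the naive bound $a_k\le a_0P_\infty$ yields only boundedness rather than convergence.
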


Let $S\subset M$ be a subset. Recall that a sequence $\{p_k\}\subset M$ is said to be {\sl{quasi-Fej\'{e}r convergent}} to $S$ if, for any $s\in S$, there exists a sequence
$\{\varepsilon_{k}\}\subset(0,+\infty)$ satisfying
$\sum_{k=1}^{\infty}\varepsilon_{k}<\infty$ such that
\begin{equation}\label{def-fejer}
{\rm d}^{2}(p_{k+1},s)\leq {\rm d}^{2}(p_{k},s)+\varepsilon_{k}\quad\mbox{for each $k\in\IN$}.
\end{equation}
We end this section with the following lemma, which provides some properties for quasi-Fej\'{e}r convergent sequences 
(see e.g., \cite[Theorem 4.3]{Ferreira1998}).

\begin{lemma}\label{fejer2}
Let $\{p_k\}\subset M$ be a sequence
quasi-Fej\'{e}r convergent to $S$. Then, $\{p_k\}$ is
bounded. If, furthermore, a cluster point $p$ of $\{p_k\}$
belongs to $S$, then $\lim_{k\rightarrow\infty}p_k=p$.
\end{lemma}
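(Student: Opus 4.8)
The plan is to prove the two assertions separately, in each case reducing everything to the behaviour of the nonnegative real sequence $a_k:={\rm d}^2(p_k,s)$ for a suitably fixed $s\in S$. \emph{Boundedness.} First I would fix an arbitrary $s\in S$ and invoke the definition of quasi-Fej\'er convergence to obtain a summable sequence $\{\varepsilon_k\}\subset(0,+\infty)$ with ${\rm d}^2(p_{k+1},s)\le{\rm d}^2(p_k,s)+\varepsilon_k$ for all $k$. Iterating this inequality and summing the telescoping right-hand side gives ${\rm d}^2(p_{k+1},s)\le{\rm d}^2(p_0,s)+\sum_{j=0}^{\infty}\varepsilon_j=:C<+\infty$ for every $k$. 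Hence the whole sequence lies in the closed ball $\IB(s,\sqrt{C})$, which is precisely boundedness. By the Hopf-Rinow theorem this ball is compact, so $\{p_k\}$ is guaranteed to admit at least one cluster point, which is what the second assertion will exploit.

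\emph{Convergence.} Now I would assume that a cluster point $p$ of $\{p_k\}$ lies in $S$ and apply quasi-Fej\'er convergence with $s:=p$, producing a summable $\{\varepsilon_k\}$ with $a_{k+1}\le a_k+\varepsilon_k$, where $a_k:={\rm d}^2(p_k,p)\ge 0$. The central step is to show that the \textbf{full} sequence $\{a_k\}$ converges. Since the recursion here is additive rather than the multiplicative form handled by Lemma \ref{lemma3}, I would instead introduce the auxiliary sequence $b_k:=a_k+\sum_{j=k}^{\infty}\varepsilon_j$; a direct estimate yields $b_{k+1}=a_{k+1}+\sum_{j=k+1}^{\infty}\varepsilon_j\le a_k+\sum_{j=k}^{\infty}\varepsilon_j=b_k$, so $\{b_k\}$ is non-increasing and bounded below by $0$, hence convergent. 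Because the tail $\sum_{j=k}^{\infty}\varepsilon_j\to 0$, the sequence $\{a_k\}=\{b_k-\sum_{j=k}^{\infty}\varepsilon_j\}$ converges to the same limit as $\{b_k\}$.

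To finish, I would use that $p$ is a cluster point: there is a subsequence $p_{k_l}\to p$, and continuity of the Riemannian distance gives $a_{k_l}={\rm d}^2(p_{k_l},p)\to 0$. A convergent real sequence possessing a subsequence tending to $0$ must itself tend to $0$, so $a_k\to 0$, i.e. ${\rm d}(p_k,p)\to 0$ and $p_k\to p$. I expect the only genuinely non-routine point to be this last passage, namely deducing convergence of the manifold sequence from convergence of the squared-distance sequence together with a single subsequence limit; the remaining ingredients (telescoping for boundedness and the monotone auxiliary sequence for convergence of $\{a_k\}$) are elementary and require no curvature or convexity hypotheses.
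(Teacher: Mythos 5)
Your proof is correct and complete: the telescoped inequality gives boundedness, and the auxiliary non-increasing sequence $b_k:=a_k+\sum_{j\ge k}\varepsilon_j$ shows that the full sequence $\{{\rm d}^2(p_k,p)\}$ converges, whence the subsequential limit $0$ forces $p_k\rightarrow p$. The paper offers no proof of this lemma, deferring to \cite[Theorem 4.3]{Ferreira1998}, and your argument is essentially the standard proof of that cited result (modulo the implicit, harmless assumption $S\neq\emptyset$ needed for the boundedness assertion), so no comparison beyond this is warranted.
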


For the remainder of the paper, we make the following assumption:
\begin{description}
    \item [{\bf (A$_{sc}$)}] Each vector $v_k$ in $\{v_k\}$ is s-compatible at $p_k$.
\end{description}
To study the local convergence of Algorithm {\ref{DA}}, we further need the following assumption:
\begin{equation}\label{assumption-a}
\begin{array}{ll}
\mbox{
  $\bar p$ is a 
  Pareto critical point and
  $F$ is quasi-convex around $\bar p$.}
\end{array}
\end{equation}
For the following key lemma, recall that $R$ is the constant given at the beginning of Algorithm {\rm\ref{DA}}.

\begin{lemma}\label{lemma1-LC} Suppose  that assumption \eqref{assumption-a} holds.
Then, for any $\delta>0$, there exist $\bar \delta, \hat\delta,c>0$
satisfying $ \bar \delta<\hat\delta<\frac\delta2$ such that,  for any $k\in\IN$,
if $\{p_{j}:0\le j\le k+1\}$   generated  by Algorithm {\rm\ref{DA}} satisfies that
  \begin{equation}\label{generatedset}
     p_0\in\IB(\bar p,\bar\delta)\quad \mbox{and}\quad \{p_{j}:1\le j\le k\}\subset \IB(\bar p, \hat\delta),
\end{equation}
 then one has   that
\begin{equation}\label{lemma1-LC00c}
{\rm d}^2(p_{k+1},q)
\leq
{\rm d}^2(p_k,q)+2R t_k\|v_k\|^2\le {\rm d}^2(p_{0},q)+c{\rm d}(p_{0},q)  
\end{equation}
if     $q\in \IB(\bar p,\hat\delta)$ satisfies  $F(q)\preceq F(p_{k+1})$,
and that
\begin{equation}\label{lemma1-LC00c2}
p_{k+1}\in \IB(\bar p,\hat\delta)\quad\mbox{ if }\,F(\bar p)\preceq F(p_{k+1}).
\end{equation}

\end{lemma}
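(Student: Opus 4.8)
The plan is to construct the radii $\bar\delta, \hat\delta$ and the constant $c$ by combining three ingredients: the curvature estimate of Lemma \ref{basic-QC}, the approximate-descent telescoping inequality of Proposition \ref{phi-p}(iii) (equivalently \eqref{GA-P1}), and the decay of $\|v_k\|$ near a Pareto critical point from Proposition \ref{simaspddp}(iv). First I would exploit assumption \eqref{assumption-a}: since $F$ is quasi-convex around $\bar p$, fix $r>0$ with $F$ quasi-convex on $\IB(\bar p, r)$, and shrink $r$ if necessary so that $\IB(\bar p, r)$ is strongly convex (using $r<r_{\rm cvx}(\bar p)$) with sectional curvatures bounded below by some $\kappa<0$; set $Q:=\IB(\bar p, r)$. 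This puts us exactly in the hypotheses of Lemma \ref{basic-QC}, whose inequality \eqref{basic-QC-1} will drive the distance estimate. I would also record, via Proposition \ref{simaspddp}(iv), that for a prescribed small $\varepsilon$ there is $\delta_\varepsilon$ with $\|v_q\|\le\varepsilon$ whenever $q\in\IB(\bar p,\delta_\varepsilon)$ and $v_q\in D_\sigma(q)$; this controls the step lengths $\|v_k\|$ uniformly along the iterates lying in $\IB(\bar p,\hat\delta)$.

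The core is the left inequality in \eqref{lemma1-LC00c}. Given $q\in\IB(\bar p,\hat\delta)$ with $F(q)\preceq F(p_{k+1})$, I would first note that by monotonicity (Proposition \ref{PC}(i)) one also has $F(q)\preceq F(p_j)$ for all $j\le k+1$, so at each step $j$ the point $q$ is a valid comparison point and $v_j$ is s-compatible by assumption \textbf{(A$_{sc}$)}. Applying Lemma \ref{basic-QC} with $p:=p_j$, $v:=v_j$, $t:=t_j$ (checking the range condition $\sqrt{|\kappa|}\,t_j\|v_j\|\le 1$, which holds once $\|v_j\|$ is small, guaranteed by Proposition \ref{simaspddp}(iv)) gives
\[
{\rm d}^2(p_{j+1},q)\le {\rm d}^2(p_j,q)+\frac{3\,t_j^2\|v_j\|^2}{2\hbar\!\left(\sqrt{|\kappa|}\,{\rm d}(p_j,q)\right)}.
\]
Since ${\rm d}(p_j,q)$ stays bounded (both points are in a ball of radius $O(\delta)$) the denominator $\hbar(\cdot)$ is bounded below by a positive constant depending only on $\kappa$ and $\delta$; together with $t_j\le R$ this yields the per-step bound ${\rm d}^2(p_{j+1},q)\le {\rm d}^2(p_j,q)+2R\,t_j\|v_j\|^2$, which is the middle term of \eqref{lemma1-LC00c}.

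Next I would telescope this per-step bound from $j=0$ to $k$ and invoke \eqref{GA-P1}/\eqref{vk} to control $\sum_j t_j\|v_j\|^2$. Using $\|v_j\|\le\varepsilon$ on the iterates and $t_j\le R$, the accumulated error is bounded by $2R\varepsilon\sum_j t_j\|v_j\|\le C\,\varphi(p_0)$, and by Proposition \ref{phi-p}(i)–(ii) together with $\varphi(\bar p)=0$ (as $\bar p$ is a critical point, hence by Lemma \ref{remarkp} type reasoning a weak Pareto optimum under the convexity hypothesis) one estimates $\varphi(p_0)\le L\,{\rm d}(p_0,\bar p)\le L\,{\rm d}(p_0,q)+L\hat\delta$; arranging constants gives the right-hand bound ${\rm d}^2(p_0,q)+c\,{\rm d}(p_0,q)$. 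Finally, for \eqref{lemma1-LC00c2} I would apply the already-proved \eqref{lemma1-LC00c} with $q:=\bar p$ (legitimate since $F(\bar p)\preceq F(p_{k+1})$ puts $\bar p$ in the comparison role), obtaining ${\rm d}^2(p_{k+1},\bar p)\le {\rm d}^2(p_0,\bar p)+c\,{\rm d}(p_0,\bar p)\le \bar\delta^2+c\bar\delta$, and then choose $\bar\delta$ small enough (relative to $\hat\delta$ and $c$) that the right side is $\le\hat\delta^2$, forcing $p_{k+1}\in\IB(\bar p,\hat\delta)$.

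The main obstacle I anticipate is the bookkeeping that keeps the whole induction self-consistent: the bound $\|v_j\|\le\varepsilon$ and the curvature estimate are only valid while the iterates remain in $\IB(\bar p,\hat\delta)$, yet \eqref{lemma1-LC00c2} is precisely what certifies that they do. The delicate point is therefore the order of quantifiers — one must first fix $\hat\delta<\delta/2$ and $\varepsilon$ (hence $c$) using only $\bar p$-local data, and only afterwards choose $\bar\delta$ so small that the quadratic-plus-linear bound $\bar\delta^2+c\bar\delta\le\hat\delta^2$ closes the induction. Getting this dependency chain right, and verifying the range condition $\sqrt{|\kappa|}\,t_j\|v_j\|\le1$ uniformly, is where the real care is needed; the distance and telescoping estimates themselves are routine once the constants are pinned down in the correct order.
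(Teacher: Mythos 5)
Your overall architecture matches the paper's proof closely: same choice of a ball $\IB(\bar p,\delta)$ with $\delta<\min\{1,r_{\rm cvx}(\bar p),1/\sqrt{|\kappa|}\}$ on which $F$ is quasi-convex and curvature is bounded below, same use of Proposition \ref{simaspddp}(iv) to force $\|v_j\|$ small (which both keeps $\gamma_j([0,t_j])$ inside $\IB(\bar p,\delta)$ and verifies the range condition $\sqrt{|\kappa|}\,t_j\|v_j\|\le 1$), same per-step estimate from Lemma \ref{basic-QC} with $\hbar$ bounded below by $3/4$, same telescoping via \eqref{vk}, and the same quantifier ordering to close the induction with $(\bar\delta+c)\bar\delta\le\hat\delta^2$. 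All of that is correct and is essentially the paper's argument.

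However, there is a genuine gap in your treatment of the telescoped error term. You bound it by $C\varphi(p_0)$ and then claim $\varphi(p_0)\le L\,{\rm d}(p_0,\bar p)$ using $\varphi(\bar p)=0$. This is unjustified: $\varphi(\bar p)=0$ if and only if $\bar p$ is a \emph{global} weak Pareto optimum of \eqref{MP}, whereas assumption \eqref{assumption-a} only gives that $\bar p$ is a Pareto critical point with $F$ quasi-convex \emph{around} $\bar p$. Lemma \ref{remarkp} requires pseudo-convexity on all of $M$, and even locally, quasi-convexity plus criticality does not yield optimality (for $n=1$, $f(x)=x^3$ is quasi-convex with $f'(0)=0$, yet $0$ is not a minimizer); so $\varphi(\bar p)$ may well be positive. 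Moreover, even granting $\varphi(\bar p)=0$, your chain $\varphi(p_0)\le L\,{\rm d}(p_0,q)+L\hat\delta$ produces the bound ${\rm d}^2(p_0,q)+c\,{\rm d}(p_0,q)+c\hat\delta$, whose extra additive constant $c\hat\delta$ is not of the form asserted in \eqref{lemma1-LC00c} and breaks the closing step: you would need $\bar\delta^2+c\bar\delta+c\hat\delta\le\hat\delta^2$, which cannot be arranged by shrinking $\bar\delta$ when $c\ge\hat\delta$. The paper avoids all of this by never evaluating $\varphi$ at $\bar p$: from \eqref{vk} it keeps the difference $\varphi(p_0)-\varphi(p_{k+1})$, uses $F(q)\preceq F(p_{k+1})$ and Proposition \ref{phi-p}(ii) to get $\varphi(q)\le\varphi(p_{k+1})$, and then applies the two-point Lipschitz estimate $\varphi(p_0)-\varphi(q)\le L\,{\rm d}(p_0,q)$ on $\IB(\bar p,\delta)$, which yields exactly ${\rm d}^2(p_0,q)+c\,{\rm d}(p_0,q)$ with $c:=2RL/\beta$. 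Replacing your $\varphi(\bar p)$-based step with this two-point estimate repairs the proof and makes it coincide with the paper's.
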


\begin{proof}
Noting that any closed ball is compact, we have   by \cite[p. 166]{Bishop1964} that the curvatures of the ball
 $\IB(\bar p,r_{\rm cvx}({\bar p}))$ are bounded, where   $r_{\rm cvx}({\bar p})$ is the convexity radius at $\bar p$
    defined in \eqref{convexity-radius}.
Let  $\kappa<0$ be   a lower bound of the curvatures of $\IB(\bar p,r_{\rm cvx}({\bar p}))$. 
Thanks to assumption \eqref{assumption-a}, there exists  $\delta>0$ (using a smaller $\delta$ if necessarily)
 such that $F$ is quasi-convex on $\IB(\bar p,\delta)$ and that
\begin{equation}\label{jiash0}
\delta<\min\left\{1, r_{\rm cvx}({\bar p}),   \frac1{\sqrt{|\kappa|}}\right\}. 
\end{equation}
Furthermore, let $L>0$ be such that $\varphi$ is Lipschitz continuous on $\IB(\bar p,\delta)$ with constant $L$ (recalling Proposition \ref{phi-p}(i)):
\begin{equation}\label{delta-00ll}
\varphi(p)-\varphi(q)\le L{\rm d}(p,q)\quad\mbox{for any }p,q\in \IB(\bar p,\delta).
\end{equation}
Now set $c:=\frac{2RL}{\beta}$ and  choose  $\hat\delta,\bar \delta>0$ be such that \begin{equation}\label{firste-re} \bar \delta<\hat\delta<\frac\delta2,\quad\quad
(\bar \delta+c)\bar \delta\le \hat\delta^2
\end{equation}  and
\begin{equation}\label{local-diameter-1}
   \|v_p\|\le \frac{\beta\delta}{2R}\quad\mbox{for any }p\in\IB(\bar p,\hat\delta)\;\mbox{and }v_p\in D_{\sigma}(p),
\end{equation}
(where existence of $\hat\delta$ of the second item of \eqref{local-diameter-1} is because of Proposition \ref{simaspddp}(iv)).
  To proceed, we verify that the implication \eqref{generatedset}$\Longrightarrow$\eqref{lemma1-LC00c} holds
for any   $k\in\IN$, any $\{p_{j}:0\le j\le k+1\}$  generated by Algorithm {\rm\ref{DA}} and any $q\in \IB(\bar p,  \hat\delta)$ satisfying
 $F(q)\preceq F(p_{k+1})$.
 Granting this and assuming that    $F(\bar p)\preceq F(p_{k+1})$. Then we estimate by \eqref{lemma1-LC00c} (applied to   $\bar p$ in place of $q$
 and noting $ {\rm d}(p_0,\bar p)\le \bar \delta$) that
\begin{equation}\label{lemma1-LC00crb}
{\rm d}^2(p_{k+1},\bar p)
\le {\rm d}^2(p_0,\bar p)+c {\rm d}(p_0,\bar p)\le (\bar \delta+ c)\bar \delta<\hat \delta^2,
\end{equation}
which implies $p_{k+1}\in \IB(\bar p, \hat \delta)$
Then, the triple  $(c,\bar\delta,\hat\delta)$ is as desired.

Thus to complete the proof,
let $k\in\IN$, and let
$\{p_{j}:0\le j\le k+1\}$ be generated by Algorithm {\rm\ref{DA}} to satisfy \eqref{generatedset}.
Fix  $j\in\{0,1,\dots,k\}$, and let $\gamma_{j}$  be the geodesic determined by  \eqref{GDA-1}.
Then,  ${\rm d}(p_{j},\bar p)\le  \hat\delta$ by \eqref{generatedset} and so $\|v_{j}\|\le \frac{\beta\delta}{2R}$ by \eqref{local-diameter-1}.
Therefore it follows from \eqref{D-tpp} that, for any $t\in [0,t_j]$,
$${\rm d}(\gamma_j(t),\bar p)\le {\rm d}(\gamma_j(t),p_{j})+{\rm d}(p_{j},\bar p)< t\,\|v_j\|+\hat\delta\le\frac{\beta\delta}{2}+\frac{1}{2}\delta<\delta,$$
(noting that $t\le t_j\le R$) and then one has that
$$
 \gamma_j({[0, t_j]})\subseteq {\rm int}\IB(\bar p,\delta)\subseteq \IB(\bar p,r_{\rm cvx}({\bar p})).
$$
Now let $q\in \IB(\bar p,\hat\delta)$ be such that  $F(q)\preceq F(p_{k+1})$. Then, we have that
$$
{\rm d}(p_j,q)\le {\rm d}(p_j,\bar p)+{\rm d}(q,\bar p)\le 2\hat \delta<\delta. 
$$
Noting that $ \sqrt{|\kappa|}\delta<1$ by the choice of $\delta$ in \eqref{jiash0}, one has that
\begin{equation}\label{3.14-12r}
\hbar\left(\sqrt{|\kappa|}{\rm
d}(p_{j},q)\right)\ge   \hbar( \sqrt{|\kappa|}\delta)\ge \hbar (1)>\frac34.
\end{equation}
Recalling that
$F(q)\preceq  F(p_j)$   
and $\sqrt{|\kappa|}t_j\|v_j\|\le\sqrt{|\kappa|}R\|v_j\|\le 1$ by \eqref{jiash0} and \eqref{local-diameter-1}, 
it follows from \eqref{basic-QC-1} (with $t_j$, $v_j$ and $p_j$ in place of $t$, $v$ and $p$) that 
\begin{equation}\label{local-b1lc-ol}
{\rm d}^2(\gamma_j(t_j),q)\leq
{\rm d}^2(p_{j},q)
 +\frac{3t_j^2\|v_j\|^2}{2\hbar\left(\sqrt{|\kappa|}{\rm
d}(p_{j},q)\right)}\leq
{\rm d}^2(p_{j},q)+2Rt_j\|v_j\|^2,
\end{equation}
where the last inequality holds by \eqref{3.14-12r} and  $t_j\le R$. Since $p_{k+1}=\gamma_{k}(t_k)$, it follows  that
\begin{equation}\label{local-b1lc}
{\rm d}^2(p_{k+1},q)
\leq
{\rm d}^2(p_{k},q)+2Rt_k\|v_k\|^2.
\end{equation}
Moreover, we first estimate by \eqref{vk} that
$$
\sum_{l=0}^kt_l\|v_j\|^2\le\sum_{l=0}^k\frac{2(\varphi(p_l)-\varphi(p_{l+1}))}{\beta}=\frac{2(\varphi(p_0)-\varphi(p_{k+1}))}{\beta}\le \frac{
2(\varphi(p_0)-\varphi(q))}{ \beta},$$
where the last inequality holds because $ \varphi(q)\le\varphi(p_{k+1}) $ by  $F(q)\preceq F(p_{k+1})$ and Proposition \ref{phi-p}(ii).
Summing up the  inequalities in \eqref{local-b1lc-ol} over $0\le j\le k-1$, one concludes that
\begin{equation}\label{local-b1lc-0}
{\rm d}^2(p_{k},q)+2Rt_k\|v_k\|^2\leq
{\rm d}^2(p_0,q)+\frac{2R}{\beta}\left(\varphi(p_0)-\varphi(q)\right).
\end{equation}
This, together with \eqref{delta-00ll}, implies that
\begin{equation}\label{local-b1lc-001}
{\rm d}^2(p_{k},q)+2Rt_k\|v_k\|^2\leq
 {\rm d}^2(p_0,q)+c {\rm d}(p_0,q).
\end{equation}
Thus \eqref{lemma1-LC00c} is seen to hold   by  \eqref{local-b1lc}, showing the implication.
%
 The proof is complete.
\end{proof}

Now, we are ready to establish  local convergence of  Algorithm {\rm\ref{DA}} under locally quasi-convex assumption.

\begin{theorem}\label{Local-Convergence}
Let $\bar p\in M$ be such that assumption \eqref{assumption-a} holds. 
Then, for any $\delta>0$, there exist $\bar \delta,\hat \delta>0$ satisfying $\bar \delta<\hat\delta<\frac\delta2$ such that,
for any sequence $\{p_k\}$  generated by Algorithm {\rm\ref{DA}} with initial point $p_0\in\IB(\bar p,\bar\delta)$, if it satisfies
\begin{equation}\label{F-G-B}
\lim_{k\rightarrow+\infty}F(p_k)\succeq F(\bar p),
\end{equation}
then one has the following assertions:

 {\rm (i)}  The sequence $\{p_k\}$ stays in $\IB(\bar p,\hat\delta)$ and converges to a point $p^*$.

 {\rm (ii)}  If it is additionally assumed  that   $\{t_k\}$  has  a positive lower bound or  that $\{t_k\}$  satisfies the Armijo step sizes, then $p^*$ is a critical point of $F$.
\end{theorem}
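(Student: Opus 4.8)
The plan is to feed the key estimate of Lemma~\ref{lemma1-LC} into the quasi-Fej\'er machinery of Lemma~\ref{fejer2}, taking the pair $(\bar\delta,\hat\delta)$ (and the auxiliary constant $c$) to be exactly the one furnished by Lemma~\ref{lemma1-LC} for the given $\delta$, so that $\bar\delta<\hat\delta<\frac\delta2$ as required. First I would unpack the hypothesis \eqref{F-G-B}. Since $\{F(p_k)\}$ is non-increasing by Proposition~\ref{PC}(i), its limit $L:=\lim_k F(p_k)$ exists and satisfies $L\preceq F(p_k)$ for every $k$; combined with $F(\bar p)\preceq L$ (which is \eqref{F-G-B}), this gives $F(\bar p)\preceq L\preceq F(p_{k+1})$ for all $k$. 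In particular $F(\bar p)\preceq F(p_{k+1})$ for every $k$, which is precisely the trigger for the second conclusion \eqref{lemma1-LC00c2} of Lemma~\ref{lemma1-LC}.

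Next I would show by induction on $k$ that the whole sequence stays in $\IB(\bar p,\hat\delta)$. The base case holds since $p_0\in\IB(\bar p,\bar\delta)\subset\IB(\bar p,\hat\delta)$ because $\bar\delta<\hat\delta$. For the inductive step, assuming $\{p_j:1\le j\le k\}\subset\IB(\bar p,\hat\delta)$, the hypothesis \eqref{generatedset} of Lemma~\ref{lemma1-LC} is met, and since $F(\bar p)\preceq F(p_{k+1})$ the implication \eqref{lemma1-LC00c2} yields $p_{k+1}\in\IB(\bar p,\hat\delta)$. Hence $\{p_k\}\subset\IB(\bar p,\hat\delta)$, a compact set, so $\{p_k\}$ has at least one cluster point $p^*\in\IB(\bar p,\hat\delta)$. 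By continuity of $F$ and convergence of the full sequence $\{F(p_k)\}$ to $L$, any convergent subsequence forces $F(p^*)=L$, whence $F(p^*)\preceq F(p_{k+1})$ for all $k$.

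To conclude convergence I would prove that $\{p_k\}$ is quasi-Fej\'er convergent to the singleton $S=\{p^*\}$. Put $\varepsilon_k:=2Rt_k\|v_k\|^2$; summing \eqref{vk} and using $\varphi\ge0$ gives $\sum_k t_k\|v_k\|^2\le\frac{2}{\beta}\varphi(p_0)<\infty$ (with $\varphi(p_0)$ finite by Proposition~\ref{phi-p}(i)), so $\sum_k\varepsilon_k<\infty$. Applying the first estimate of \eqref{lemma1-LC00c} with $q:=p^*$ (its hypothesis \eqref{generatedset} holds for every $k$ by the previous step, and the side conditions $p^*\in\IB(\bar p,\hat\delta)$ and $F(p^*)\preceq F(p_{k+1})$ hold) yields ${\rm d}^2(p_{k+1},p^*)\le {\rm d}^2(p_k,p^*)+\varepsilon_k$ for every $k$, which is exactly \eqref{def-fejer} for $S=\{p^*\}$. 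Since the cluster point $p^*$ belongs to $S$, Lemma~\ref{fejer2} forces $\lim_k p_k=p^*$, proving (i); for (ii), once $\{t_k\}$ has a positive lower bound or satisfies the Armijo rule, Proposition~\ref{PC}(iii) makes every cluster point Pareto critical, and $p^*$ is the limit. The only genuinely delicate point is the bookkeeping that converts \eqref{F-G-B} and monotonicity into the uniform comparison $F(p^*)\preceq F(p_{k+1})$, which is what licenses $p^*$ as an admissible anchor $q$ in Lemma~\ref{lemma1-LC}; all the curvature-dependent difficulty has already been absorbed into that lemma.
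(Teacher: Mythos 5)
Your proof is correct and follows essentially the same route as the paper's: the pair $(\bar\delta,\hat\delta)$ from Lemma \ref{lemma1-LC}, induction via \eqref{lemma1-LC00c2} and \eqref{F-G-B} to trap $\{p_k\}$ in $\IB(\bar p,\hat\delta)$, quasi-Fej\'{e}r convergence from \eqref{lemma1-LC00c} and the summability \eqref{GA-P1}, Lemma \ref{fejer2} for (i), and Proposition \ref{PC}(iii) for (ii). The only cosmetic difference is that you anchor the quasi-Fej\'{e}r argument at the singleton $\{p^*\}$, whereas the paper uses the sublevel set $L_{\bar\delta}:=\{p\in \IB(\bar p,\hat\delta): F(p)\preceq \inf_{k\in\IN}F(p_k)\}$; this is immaterial, since the paper too concludes by applying Lemma \ref{fejer2} at the cluster point $p^*\in L_{\bar\delta}$.
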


\begin{proof} 
By the assumed  \eqref{assumption-a}, Lemma \ref{lemma1-LC} is applicable. Thus, for any $\delta>0$, there exist $\bar \delta<\hat\delta<\frac\delta2$ such that,  for  any sequence
   $\{p_{k}\}$   generated by Algorithm {\rm\ref{DA}}, if it satisfies  \eqref{generatedset} then  \eqref{lemma1-LC00c2} holds (for any $k$); hence  the following implication holds for each $k\in\IN$:
   \begin{equation}\label{newjiaim}
   [ \mbox{\eqref{generatedset}   and   \eqref{F-G-B} hold}]\Longrightarrow    p_{k+1}\in \IB(\bar p,  \hat \delta).
   \end{equation}
 Now, let $\{p_k\}$  be a sequence generated by Algorithm {\rm\ref{DA}} with initial point  $p_0\in\IB(\bar p,\bar\delta)$ such that  \eqref{F-G-B} holds. 
Then   one checks by \eqref{newjiaim} (applied to $k=0$) that   $p_1\in \IB(\bar p,\hat\delta)$, and 
   concludes by mathematical induction  that $\{p_k\}\subset\IB(\bar p,\hat\delta)$, 
showing the first conclusion of assertion (i). 
 Consequently,   the sequence $\{p_k\}$ has at least one cluster point, say  $p^*$. Letting $L_{\bar\delta}:=\{p\in \IB(\bar x, \hat\delta): F(p)\preceq \inf_{k\in\IN}F(p_k)\}$, one sees that 
$p^*\in L_{\bar\delta}$ since $\{F(p_k)\}$ is decreasing and $F$ is continuous on $\IB(\bar p,{\hat\delta})$ (using a smaller $\delta$ if necessary). Then, \eqref{lemma1-LC00c} holds for each $q\in L_{\bar \delta}$. Thanks to $\sum_{k=1}^{\infty} t_k\|v_k\|^2<+\infty$ by \eqref{GA-P1}, we get that $\{p_k\}$ is quasi-Fej\'{e}r convergent to $L_{\bar\delta}$. Hence, we conclude by Lemma \ref{fejer2}   that $\lim_{k\rightarrow\infty}p_k=p^*$ (recalling $p^*\in L_{\bar\delta}$). Thus, the second conclusion of assertion (i) is seen to hold. 

Assertion (ii) is a direct consequence of assertion (i) and Proposition \ref{PC}(iii). This completes the proof. 
\end{proof}

\section{Local linear convergence without locally quasi-convex assumption}

To study the linear convergence property, we need the following   Kurdyka-{\L}ojasiewicz-like  property. Let $\bar p$ be a locally weak  Pareto optimum of $F$. Consider the following condition
on some ball $\IB(\bar p, r)$ with some  constant $\alpha>0$:
\begin{equation}\label{local-m-re}
\|v(p)\|^2
\ge \alpha\varphi(p)\quad\mbox{ for each }p\in \IB(\bar p, r),
\end{equation}
 where $v(p)$ is the steepest descent direction at $p$ given by \eqref{Def-vp} and $\varphi$ is defined by \eqref{eq-varp}.
Our second main result in this subsection is on the linear convergence property of Algorithm {\rm\ref{SDAA}}  without locally quasi-convex assumption.
 Note that, to guarantee the linear convergence, it is required  in Theorem \ref{Local-Linear} that the corresponding step sizes $\{t_k\}$ have  a positive lower bound, which is  satisfied  by the Armijo step sizes  in the case when  $JF(\cdot)$ is Lipschitz continuous around $\bar p$; see Lemma \ref{Bt-Lip} below.

\begin{theorem}\label{Local-Linear}
Let $\bar p\in M$ be a 
weak Pareto  optimum of \eqref{MP} such that    
\begin{equation}\label{assumptipn-b01} \mbox{
\eqref{local-m-re} holds for some $\alpha>0$ and $ r>0$}.
\end{equation}
Then, there exists $ \delta>0$ such that, for any   sequence $\{p_k\}$ generated by Algorithm {\rm\ref{SDAA}} with   initial point
      $p_0\in\IB(\bar p,  \delta)$, if   the corresponding step sizes $\{t_k\}$ satisfy   $\b{\rm t}:=\inf_{k\ge 0} \{t_k\}>0$, then $\{p_k\}$ stays at $\IB(\bar p,r)$,
 converges linearly to  a 
 weak  Pareto optimum $p^*$ of \eqref{MP} and satisfies  \begin{equation}\label{L-T}
{\rm d}^2(p_k,p^*)\le \mu  \varphi(p_k) \le \mu\rho^{2k} \varphi  (p_0) \quad\mbox{ for each }k\in\IN,
\end{equation} where $\mu:=\frac{2R}{(1-\rho)^2\beta}$ and   $\rho:=\sqrt{1-\frac{\alpha\beta\b{t}(1-\sqrt{\sigma})^2}{2}}$.
\end{theorem}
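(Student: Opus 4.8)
The plan is to show that the merit function $\varphi$ of \eqref{eq-varp} decreases linearly along the iterates and that each step length ${\rm d}(p_{k+1},p_k)$ is controlled by $\sqrt{\varphi(p_k)}$; summing a geometric series then yields simultaneously the stay-in-ball property, the convergence, and the quantitative estimate \eqref{L-T}. Throughout I would use that $\varphi(\bar p)=0$ (since $\bar p$ is a weak Pareto optimum) together with the local Lipschitz continuity of $\varphi$ from Proposition \ref{phi-p}(i), fixing a modulus $L$ valid near $\bar p$.

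First I would record the \emph{one-step decrease}. Fix any $k$ with $p_k\in\IB(\bar p,r)$ (and $p_k$ not Pareto critical; if some iterate is Pareto critical, then $v(p_k)=0$, so \eqref{local-m-re} forces $\varphi(p_k)=0$ and the conclusion is immediate). Combining Proposition \ref{phi-p}(iii), the lower bound $\|v_k\|\ge(1-\sqrt{\sigma})\|v(p_k)\|$ from Proposition \ref{simaspddp}(iii), the hypothesis $t_k\ge\b{\rm t}$, and the Kurdyka-{\L}ojasiewicz-like inequality \eqref{local-m-re} in the form $\|v(p_k)\|^2\ge\alpha\varphi(p_k)$, I obtain
$$\varphi(p_{k+1})\le\varphi(p_k)-\frac{\beta t_k}{2}\|v_k\|^2\le\Big(1-\frac{\alpha\beta\b{\rm t}(1-\sqrt{\sigma})^2}{2}\Big)\varphi(p_k)=\rho^2\varphi(p_k).$$
Since $\varphi(p_{k+1})\ge0$, this same chain forces $\frac{\alpha\beta\b{\rm t}(1-\sqrt{\sigma})^2}{2}\le1$, so $\rho\in[0,1)$ and $\mu$ is finite; iterating gives $\varphi(p_k)\le\rho^{2k}\varphi(p_0)$ so long as the iterates remain in $\IB(\bar p,r)$.

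Next I would bound the step lengths. From ${\rm d}(p_{k+1},p_k)\le t_k\|v_k\|$ (Proposition \ref{PC}(i)), the decrease $t_k\|v_k\|^2\le\frac{2}{\beta}(\varphi(p_k)-\varphi(p_{k+1}))\le\frac{2}{\beta}\varphi(p_k)$, and $t_k\le R$, I get ${\rm d}(p_{k+1},p_k)\le\sqrt{\tfrac{2R}{\beta}}\,\sqrt{\varphi(p_k)}$ whenever $p_k\in\IB(\bar p,r)$. The crucial point is then an induction showing $\{p_k\}\subset\IB(\bar p,r)$. I would choose $\delta>0$ so small that $\delta+\frac{1}{1-\rho}\sqrt{\tfrac{2R}{\beta}}\sqrt{L\delta}\le r$ (possible since the left side tends to $0$ as $\delta\to0^+$), and use $\varphi(p_0)=\varphi(p_0)-\varphi(\bar p)\le L\,{\rm d}(p_0,\bar p)\le L\delta$. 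Assuming $p_0,\dots,p_k\in\IB(\bar p,r)$, the two displayed bounds and $\varphi(p_j)\le\rho^{2j}\varphi(p_0)$ give, via the triangle inequality,
$$
{\rm d}(p_{k+1},\bar p)\le{\rm d}(p_0,\bar p)+\sum_{j=0}^{k}{\rm d}(p_{j+1},p_j)\le\delta+\sqrt{\tfrac{2R}{\beta}}\sqrt{\varphi(p_0)}\sum_{j\ge0}\rho^j\le r,
$$
so $p_{k+1}\in\IB(\bar p,r)$, closing the induction.

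Finally, with the whole sequence confined to $\IB(\bar p,r)$, the step-length bound and $\varphi(p_k)\le\rho^{2k}\varphi(p_0)$ make $\sum_k{\rm d}(p_{k+1},p_k)$ a convergent geometric series, so $\{p_k\}$ is Cauchy and converges to some $p^*$; continuity of $\varphi$ gives $\varphi(p^*)=\lim_k\varphi(p_k)=0$, i.e.\ $p^*$ is a weak Pareto optimum. For the rate, I would restart the linear decrease at $p_k$, namely $\varphi(p_l)\le\rho^{2(l-k)}\varphi(p_k)$ for $l\ge k$, and bound
$$
{\rm d}(p_k,p^*)\le\sum_{l\ge k}{\rm d}(p_{l+1},p_l)\le\sqrt{\tfrac{2R}{\beta}}\sum_{l\ge k}\sqrt{\varphi(p_l)}\le\frac{1}{1-\rho}\sqrt{\tfrac{2R}{\beta}}\sqrt{\varphi(p_k)};
$$
squaring yields ${\rm d}^2(p_k,p^*)\le\mu\,\varphi(p_k)$, and combining with $\varphi(p_k)\le\rho^{2k}\varphi(p_0)$ gives \eqref{L-T}. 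I expect the main obstacle to be the stay-in-ball induction: it is what ties the smallness of $\delta$ to the radius $r$ on which the Kurdyka-{\L}ojasiewicz-like inequality is valid, and it must be arranged so that the geometric tail is controlled by $\sqrt{L\delta}$ (not merely $\delta$), which is exactly why $\bar p$ being optimal with $\varphi(\bar p)=0$ is used.
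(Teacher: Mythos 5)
Your proposal is correct and follows essentially the same route as the paper's proof: the one-step contraction $\varphi(p_{k+1})\le\rho^2\varphi(p_k)$ via \eqref{local-m-re} and Proposition \ref{simaspddp}(iii), the step-length bound ${\rm d}(p_{k+1},p_k)\le\sqrt{2R\varphi(p_k)/\beta}$ from \eqref{vk}, the stay-in-ball induction with $\delta$ chosen so that $\delta$ plus the geometric tail is at most $r$, and the tail estimate ${\rm d}(p_k,p^*)\le\frac{1}{1-\rho}\sqrt{2R\varphi(p_k)/\beta}$ giving \eqref{L-T}. The only cosmetic deviations — invoking the Lipschitz modulus to write $\varphi(p_0)\le L\delta$ where the paper uses mere continuity of $\varphi$ at $\bar p$, and your explicit remarks that $\rho\in[0,1)$ and that Pareto-critical iterates force $\varphi(p_k)=0$ — are harmless refinements of the same argument.
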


\begin{proof}
Note by definition that
$\varphi(\bar p)=0$.
Recalling that $\varphi$ is continuous on $M$, one can
choose $\delta>0$ small enough such that $\delta<r$ and
\begin{equation}\label{delta-0}
\quad\frac{1}{1- {\rho}}\sqrt{\frac{2R\varphi(p)}{\beta}}\le r-\delta\quad\mbox{for all } p\in\IB(\bar p,\delta).
\end{equation}
Below we show that $\delta$ is as desired. To this end, let $\{p_k\}$ be a sequence generated by Algorithm {\rm\ref{SDAA}} with initial point
$p_0\in\IB(\bar p,\delta)$.
Then by     step 3 of Algorithm {\rm\ref{SDAA}} and \eqref{vk}, 
the following relation holds for each $k,l\in\IN$,
 \begin{equation}\label{local-m-L1}
\begin{array}{lll}
{\rm d}^2(p_{k+l+1},p_{k+l}) \le Rt_{k+l}\|v_{k+l}\|^2\le \frac{2R(\varphi(p_{k+l})-\varphi(p_{k+l+1
}))}{\beta}\le \frac{2R\varphi(p_{k+l}) }{\beta} 
\end{array}
 \end{equation}
 (noting $t_{k+l}\in(0,R]$ and $\varphi(p)\ge0$ for all $p\in M$).
 We first show inductively that
\begin{equation}\label{pki}
\{p_k\}\subseteq\IB(\bar p,r).
\end{equation}
Clearly, \eqref{pki} holds   for $k=0$. Now assume that
\begin{equation}\label{pkk}
\{p_j:\; j=0,1,\dots,k\}\subseteq\IB(\bar p,r). 
\end{equation}
Then, it follows from \eqref{Def-gpss} and
 \eqref{assumptipn-b01}   that
 $$
 \|v_j\|^2\ge(1-\sqrt{\sigma})^2\|v(p_j)\|
\ge \alpha (1-\sqrt{\sigma})^2\varphi(p_j)\quad\mbox{for all } j=0,1,\dots,k.
$$
Hence, for all $j=0,1,\dots,k$, one checks from \eqref{vk}   that
\begin{equation}\label{locall-0}
 \varphi(p_{j+1}) \le \varphi(p_j)- \frac{ \beta t_j}2\|v_j\|^2
  \le \varphi(p_j)- \frac{\alpha(1-\sqrt{\sigma})^2\beta t_j}2\varphi(p_j)\le\rho^2 \varphi(p_j).
\end{equation}
Thus, we get that
\begin{equation}\label{locall-0l}
\varphi(p_{j})\le\rho^{2j}\varphi(p_0)\quad\mbox{for all } j=0,1,\dots,k.
\end{equation}
This, together with \eqref{local-m-L1}, implise that  
  for all $j=0,1,\dots,k$,
$$ 
{\rm d}^2(p_{j+1},p_{j}) \le   \rho^{2j}\frac{2R\varphi(p_0)}{\beta}.
$$ 
and so
$$
\begin{array}{lll}
{\rm d}(p_{k+1},\bar p) &\le\sum_{j=0}^k{\rm d}(p_{j+1},p_{j})+{\rm d}(p_{0},\bar p)\le\sum_{j=0}^k\sqrt{\rho^{2j}\frac{2R\varphi(p_0)}{\beta}}+{\rm d}(p_{0},\bar p)\\
&\le\frac{1}{1- {\rho}}\sqrt{\frac{2R\varphi(p_0)}{\beta}
}+{\rm d}(p_{0},\bar p)\le r-\delta+\delta=r,
\end{array}
$$
where the last inequality is by the choice of $\delta$ (see \eqref{delta-0}). Thus, \eqref{pki} is valid by mathematical induction. Furthermore, by the arguments for proving \eqref{pki}, we see that \eqref{locall-0} and \eqref{locall-0l} hold for all $j\in \IN$.
Hence the following relations hold for each $k,l\in\IN$:
\begin{equation}\label{local-m-L1k}
\begin{array}{lll}
\varphi(p_{k+l})\le\rho^{2l}\varphi(p_k)\quad \mbox{and}\quad\varphi(p_{k}) \le\rho^{2k}\varphi(p_0). 
\end{array}
\end{equation}
Recalling $\varphi(p_k)\ge0$ for each $k$, there holds 
\begin{equation}\label{varphi0}
\lim_{k\rightarrow\infty}\varphi(p_k)=0.
\end{equation}
 Combing \eqref{local-m-L1} and  \eqref{local-m-L1k} yields  that
$$
\begin{array}{lll}
{\rm d}(p_{k+l+1},p_{k+l})\le \rho^l \sqrt{\frac{2R \varphi(p_{k})}{\beta}} \quad\mbox{for any }k,l\in\IN,
\end{array}
$$
  and then 
 \begin{equation}\label{local-m-L1-wang-2}
\begin{array}{lll}
{\rm d}(p_{k+l},p_k)\le\sum_{j=1}^l{\rm d}(p_{k+j},p_{k+j-1})\le \frac{ 1-\rho^l }{1-\rho}  \sqrt{\frac{2R \varphi(p_{k})}{\beta}}.
\end{array}
 \end{equation}
Thus, in view of \eqref{varphi0}, the sequence $\{p_k\}$ is a Cauchy sequence, and then $\{p_k\}$ converges to some point $p^*$ satisfying $\varphi(p^*)=0$ (noting that   $\varphi$ is continuous), and so $p^*$ is a weak  Pareto optimum of \eqref{MP}.  
Letting $l$ goes to infinite in \eqref{local-m-L1-wang-2} and noting the second item of \eqref{local-m-L1k}, we have that
$$
{\rm d}(p_{k},p^*)\le \frac{ 1 }{1-\rho} \sqrt{\frac{2R \varphi(p_{k})}{\beta}}\le\frac{ 1 }{1-\rho} \sqrt{\frac{2R\varphi(p_{0})}{\beta}}\rho^{  k}.
$$
Hence, \eqref{L-T} is seen to hold, completing the proof.
\end{proof}

\begin{remark}\label{extendgra}
Theorem \ref{Local-Linear} establishs  the linear convergence property of Algorithm {\rm\ref{SDAA}}  without locally quasi-convex assumption, which seems new even in linear spaces setting. Furthermore, in the case when the multiobjective optimization is reduced to scalar optimization (i.e., $I=\{1\}$), our result  improves sharply the corresponding result in \cite{WLWYSIAM2019} in the sense that we remove  the local  quasi-convexity  assumption.
\end{remark}

The following lemma provides a sufficient condition for the step size sequence $\{t_k\}$ generated by the Armijo step sizes to have a positive lower bound.

\begin{lemma}\label{Bt-Lip}
Let $\bar p\in M$ be such that 
assumption \eqref{assumption-a} holds,
and suppose that $JF(\cdot)$ is Lipschitz continuous around $\bar p$. 
Then,     there exist $\b{\rm t}>0$ and $\bar\delta>0$      such that, for any $p_0\in\IB(\bar p,\bar\delta)$,  if Algorithm \ref{SDAA}  employs
the Armijo step sizes and the generated   sequence $\{p_k\}$  satisfies \eqref{F-G-B}, 
then     the generated step sizes $\{t_k\}$  satisfies that $\inf_{k\in\IN}t_k\ge \b{\rm t}$. 
\end{lemma}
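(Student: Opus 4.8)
The plan is to show that under the Armijo rule, the step size at each iterate $p_k$ cannot be forced arbitrarily small, by exploiting the Lipschitz continuity of $JF$ together with the uniform quadratic descent guaranteed by \eqref{Def-gps}. The key mechanism is the standard Armijo backtracking argument: if $t_k$ is the accepted Armijo step size (the largest element of $\{\nu^{-i}\}$ satisfying \eqref{GDA-2}), then either $t_k$ already attains the ceiling $R$ (in which case there is nothing to prove since $t_k \geq$ some fixed positive quantity), or the candidate $\nu t_k$ fails the Armijo inequality \eqref{GDA-2}. The failure of the larger candidate, combined with a Taylor-type estimate along the geodesic $\gamma_k$, will force a lower bound on $t_k$ of the form $t_k \geq \text{const}$, where the constant depends only on $\beta$, $\nu$ and the local Lipschitz modulus of $JF$, but \emph{not} on $k$.

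First I would invoke Lemma \ref{lemma1-LC} (applicable since \eqref{assumption-a} holds) to produce $\bar\delta, \hat\delta > 0$ so that any sequence $\{p_k\}$ with $p_0 \in \IB(\bar p, \bar\delta)$ satisfying \eqref{F-G-B} stays inside $\IB(\bar p, \hat\delta)$; this is exactly the mechanism from the proof of Theorem \ref{Local-Convergence}, giving a fixed compact neighborhood on which all iterates live. Shrinking $\delta$ if necessary, I take a ball $\IB(\bar p, \delta')$ on which $JF(\cdot)$ is Lipschitz with some modulus $\ell > 0$, and on which $\gamma_k([0,t_k])$ remains (using \eqref{D-tpp} and the uniform bound $\|v_k\| \leq \frac{\beta\delta}{2R}$ from \eqref{local-diameter-1}, as in the proof of Lemma \ref{lemma1-LC}). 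The heart of the estimate is then to bound, for each component $f_i$,
$$
f_i(\gamma_k(t)) - f_i(p_k) - \beta t\, \langle \nabla f_i(p_k), v_k\rangle
$$
from above. Writing $g_i(t) := f_i(\gamma_k(t))$, one has $g_i'(0) = \langle \nabla f_i(p_k), v_k\rangle$ and $g_i'(t) = \langle \nabla f_i(\gamma_k(t)), \gamma_k'(t)\rangle$; using parallel transport and the Lipschitz bound on $JF$ together with $\|\gamma_k'(t)\| = \|v_k\|$ (geodesics have constant speed), I would obtain
$$
g_i(t) - g_i(0) - t\, g_i'(0) \leq \frac{\ell}{2}\, t^2 \|v_k\|^2,
$$
a Riemannian analogue of the descent lemma. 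Combining this with $\langle \nabla f_i(p_k), v_k\rangle \leq -\frac12\|v_k\|^2$ from \eqref{Def-gps}, the Armijo inequality \eqref{GDA-2} for component $i$ is satisfied as soon as $\frac{\ell}{2}t^2\|v_k\|^2 \leq (1-\beta)\,t\,\frac12\|v_k\|^2$, i.e. whenever $t \leq \frac{1-\beta}{\ell}$.

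This shows every $t \in (0, \frac{1-\beta}{\ell}]$ satisfies \eqref{GDA-2} simultaneously for all $i \in I$. Since the Armijo rule selects the \emph{largest} admissible $\nu^{-i}$, the accepted step $t_k$ must satisfy $\nu t_k > \frac{1-\beta}{\ell}$ whenever $t_k < R$ (otherwise a larger admissible candidate would have been chosen), forcing $t_k > \frac{\nu(1-\beta)}{\ell}$; and if $t_k = R$ the bound is trivially satisfied. Setting $\b{\rm t} := \min\left\{R, \frac{\nu(1-\beta)}{\ell}\right\} > 0$, which is independent of $k$, then gives $\inf_{k\in\IN} t_k \geq \b{\rm t}$, as desired. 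I expect the main obstacle to be the careful justification of the Riemannian descent inequality for $g_i$: one must control $g_i'(t) - g_i'(0)$ using parallel transport along $\gamma_k$ and the Lipschitz continuity of $JF$ measured in the appropriate transported sense, ensuring the constant $\ell$ is genuinely uniform over the fixed neighborhood and does not degrade as the iterates move. Once this local-descent estimate is pinned down, the backtracking conclusion is routine.
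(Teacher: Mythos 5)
Your proposal is correct and follows essentially the same route as the paper's proof: localize all iterates in a fixed ball via Theorem \ref{Local-Convergence} (so that a uniform Lipschitz modulus for $JF$, measured via parallel transport along $\gamma_k$, is available), combine the resulting quadratic estimate with $\langle\nabla f_i(p_k),v_k\rangle\le-\frac12\|v_k\|^2$ from \eqref{Def-gps}, and conclude from the maximality in the Armijo rule \eqref{GDA-2-A}; the paper merely phrases the quadratic bound via the mean value theorem at the single failed candidate $\nu^{-1}t_k$ (getting the constant $\frac{\nu(1-\beta)}{2L}$ where your integrated descent lemma gives $\frac{\nu(1-\beta)}{\ell}$). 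The only point to tighten is that your containment step must cover the rejected candidate $\gamma_k(\nu^{-1}t_k)$, not just $\gamma_k([0,t_k])$ --- the paper handles this by taking the Lipschitz modulus on the enlarged ball $\IB(\bar p,3\nu^{-1}\hat\delta)$ --- and your case split should use the maximal Armijo candidate rather than the ceiling $R$, neither of which affects the validity of the argument.
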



\begin{proof}
By assumption, 
Theorem \ref{Local-Convergence} is applicable to getting that, for any $\delta>0$,  there exist $\bar\delta,\hat\delta>0$  satisfying $\bar \delta<\hat\delta<\frac \delta2$  with the property stated there.
Without loss of generality, we may assume further that $3\nu^{-1}\hat\delta<r_{\rm cvx}({\bar p})$, and 
  there exists $L>0$ such that for each $i\in I$,
\begin{equation}\label{Lip-1-G}
\|\nabla f_i(p)-P_{p,q}\nabla f_i(q)\|\le L{\rm d}(p,q)\quad\mbox{for any }p,q\in\IB(\bar p,3\nu^{-1}\hat\delta)
\end{equation}
(where $\nu$ is chosen by the Armijo step size rule \eqref{GDA-2-A}).

Let $\b{t}:=\min\left\{\nu, \frac{\nu(1-\beta) }{2 L}\right\}$. Below, we show that $\b{t},\hat\delta$ are as desired. To do this,
let   $p_0\in\IB(\bar p,\bar\delta)$, and let  $\{t_k\}$ and $\{p_k\}$ be the generated Armijo step sizes and
the generated sequence by Algorithm {\rm\ref{SDAA}} with initial point $p_0$, respectively.
Now
fix $k$  and assume that $t_k\le\nu$. Then, by \eqref{GDA-2-A}, we see that there exists $i\in I$ such that
\begin{equation}\label{Lip-3-Bt}
f_i(\gamma_{k}(\nu^{-1}t_{k}))- f_i(p_{k})\ge  \nu^{-1}\beta t_{k}\langle\nabla f_i(p_{k}),v_k\rangle.
\end{equation}
Noting that $\IB(\bar p, \bar c\bar \delta)$ is strongly convex, one sees that $\gamma_k({[0,t_k]})$ is the unique minimal geodesic joining $p_k$ to $p_{k+1}$. Therefore  $t_k\|v_k\|={\rm d}(p_k,p_{k+1}))$, and it follows that
\begin{equation*}\label{mingeo}  {\rm d}(p_{k},\gamma_{k}({\nu^{-1}}  t_k))\le \nu^{-1}t_k\|v_k\|= \nu^{-1}{\rm d}(p_k,p_{k+1})\le 2\nu^{-1}\hat\delta,\end{equation*}
(see Theorem \ref{Local-Convergence}(i) for the last inequality). Thus, using the triangle inequality  and noting that $1<\nu^{-1}$, one checks that  $\gamma_{k}({\nu^{-1}}  t_k) \in \IB(\bar p,3\nu^{-1}\hat\delta)$ because
\begin{equation*}\label{geodes-tr}
{\rm d}(\bar p,\gamma_{k}({\nu^{-1}}  t_k)) \le {\rm d}(\bar p,p_{k})+\nu^{-1}{\rm d}(p_k,p_{k+1})  \le 3\nu^{-1}\hat\delta.\end{equation*}
Using the mean value theorem, 
we can choose  $\bar t_k\in(0,t_k)$ to satisfy that
\begin{equation}\label{Full-p5}
f_i(\gamma_{k}({\nu^{-1}}t_k))-f_i(p_{k})=\left\langle\nabla f_i\left(\gamma_{k}({\nu^{-1}}\bar t_k)\right), {\nu^{-1}}t_kP_{\gamma_k,\gamma_{k}({\nu^{-1}}\bar t_k),p_k} v_k\right\rangle\\
\end{equation}
Since
$$
\begin{array}{lll}
&&\left\langle\nabla f_i\left(\gamma_{k}({\nu^{-1}}\bar t_k)\right), P_{\gamma_k,\gamma_{k}({\nu^{-1}}\bar t_k),p_k} v_k\right\rangle\\
&=& \left\langle P_{\gamma_k,p_k,\gamma_{k}({\nu^{-1}}\bar t_k)}\nabla f_i\left(\gamma_{k}({\nu^{-1}}\bar t_k)\right)-\nabla f_i(p_{k}),v_k\right\rangle+\langle\nabla f_i(p_{k}),v_k\rangle\\
&\le&\|P_{\gamma_k,p_k,\gamma_{k}({\nu^{-1}}\bar t_k)}\nabla f_i\left(\gamma_{k}({\nu^{-1}}\bar t_k)\right)-
\nabla f_i(p_{k})\|\cdot\|v_k\|+\langle\nabla f_i(p_{k}),v_k\rangle\\
&\le& \nu^{-1} t_kL \|v_k\|^2+\langle\nabla f_i(p_{k}),v_k\rangle,
\end{array}
$$
where the last inequality holds by \eqref{Lip-1-G} (as $\gamma_{k}({\nu^{-1}}\bar t_k) \in \IB(\bar p,3\nu^{-1}\hat\delta)$),
it follows from \eqref{Full-p5}   that
$$
f_i(\gamma_{k}({\nu^{-1}}t_k))-f_i(p_{k})
 \le {\nu^{-1}}t_k(\nu^{-1} t_kL \|v_k\|^2+\langle\nabla f_i(p_{k}),v_k\rangle).
$$
Combining this and \eqref{Lip-3-Bt}, we conclude that
$$0\le \nu^{-1} t_kL \|v_k\|^2+(1-\beta)\langle\nabla f_i(p_{k}),v_k\rangle.$$
Hence, it follows from \eqref{Def-gps} that
$$0\le  \left(\nu^{-1} t_kL+(1-\beta)\left(-\frac12\right)\right)\|v_k\|^2.$$ This implies that
 $t_k\ge \frac{\nu(1-\beta) }{ 2L}$ (in the case when   $t_k\le \nu$), and so
 $\inf_{k\in\IN}t_k\ge \min\left\{\nu, \frac{\nu(1-\beta) }{ 2L}\right\}$ as desired to show.
\end{proof}

\section{Global  convergence}
The following theorem regards the global convergence and the linear convergence of Algorithm \ref{SDAA}. We emphasize that the convergence result as well as the linear convergence rate of Algorithm \ref{SDAA} is independent of the curvatures of $M$.

\begin{theorem}\label{full-1}
Suppose that the sequence $\{p_k\}$ generated by Algorithm \ref{SDAA}  has a cluster point $\bar p$. 
  Then, the following assertions hold:

{\rm (i)} 
 If  \eqref{assumption-a} holds, then $\{p_k\}$ converges to $\bar p$.

{\rm (ii)} If $\bar p $ is a 
weak Pareto  optimum of \eqref{MP},  $\inf_{k\ge 0} \{t_k\}>0$ and 
assumption \eqref{assumptipn-b01} holds, 
then
$\{p_k\}$ converges linearly to $\bar p$. 
\end{theorem}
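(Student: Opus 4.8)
The plan is to reduce the global statements to the already-established local results of Theorems \ref{Local-Convergence} and \ref{Local-Linear} by exploiting the cluster point $\bar p$. The guiding observation is that $\{F(p_k)\}$ is monotonically non-increasing by Proposition \ref{PC}(i), so along the subsequence converging to $\bar p$ we have $F(p_{k_j})\to F(\bar p)$ by continuity of $F$; combined with monotonicity this forces $\lim_{k\to\infty}F(p_k)=F(\bar p)$ for the \emph{whole} sequence. In particular the hypothesis $\lim_{k\to\infty}F(p_k)\succeq F(\bar p)$ in \eqref{F-G-B} holds automatically here (indeed with equality), so the local theorems become applicable once the sequence enters a suitable ball.

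For assertion (i), assuming \eqref{assumption-a}, I would invoke Theorem \ref{Local-Convergence} to produce, for any prescribed $\delta>0$, the radii $\bar\delta<\hat\delta<\frac\delta2$ with the stated convergence property for sequences started inside $\IB(\bar p,\bar\delta)$. The key step is that, since $\bar p$ is a cluster point, some iterate $p_{k_0}$ lies in $\IB(\bar p,\bar\delta)$. The tail $\{p_k\}_{k\ge k_0}$ is itself a sequence generated by Algorithm \ref{DA} with initial point $p_{k_0}\in\IB(\bar p,\bar\delta)$, and it satisfies \eqref{F-G-B} by the monotonicity argument above. Theorem \ref{Local-Convergence}(i) then yields that this tail stays in $\IB(\bar p,\hat\delta)$ and converges to some limit $p^*$. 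Since $\bar p$ is a cluster point of the tail, one must have $p^*=\bar p$, so the whole sequence converges to $\bar p$.

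For assertion (ii), the structure is identical but uses Theorem \ref{Local-Linear} instead: with $\bar p$ a weak Pareto optimum, \eqref{assumptipn-b01} in force, and $\inf_k t_k>0$, that theorem supplies a radius $\delta>0$ so that any sequence started in $\IB(\bar p,\delta)$ converges linearly and satisfies the rate estimate \eqref{L-T}. Again I locate $k_0$ with $p_{k_0}\in\IB(\bar p,\delta)$ using the cluster-point property, apply Theorem \ref{Local-Linear} to the tail (noting $\inf_{k\ge k_0}t_k\ge\inf_k t_k>0$), obtain linear convergence of the tail to a weak Pareto optimum $p^*$, and identify $p^*=\bar p$ because $\bar p$ is a cluster point. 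The linear rate transfers to the whole sequence up to a finite-index shift.

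The main obstacle, though a mild one, is the bookkeeping needed to pass from the tail to the full sequence: one must check that re-indexing the tail as a fresh run of Algorithm \ref{DA} is legitimate (it is, since the algorithm is memoryless and the step-size/direction selections at $p_{k_0}$ are unaffected by earlier history) and that the verification of \eqref{F-G-B} for the tail is genuinely automatic rather than an extra assumption. I expect no curvature-dependent estimate to intrude, which is precisely the point emphasized before the statement; all curvature-sensitive work was already absorbed into Lemma \ref{lemma1-LC} and hence into the local theorems.
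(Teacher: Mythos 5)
Your proposal is correct and takes essentially the same route as the paper's own proof: the paper likewise observes that \eqref{F-G-B} is automatic because $\{F(p_k)\}$ is non-increasing and $\bar p$ is a cluster point, locates some $p_{k_0}$ in the ball furnished by Theorem \ref{Local-Convergence} (resp.\ Theorem \ref{Local-Linear} for assertion (ii)), applies the local theorem to the tail started at $p_{k_0}$, and identifies the resulting limit with $\bar p$. Your additional bookkeeping --- the re-indexing of the tail as a fresh run of the algorithm, the bound $\inf_{k\ge k_0}t_k\ge\inf_{k}t_k>0$, and the finite-index shift for the linear rate --- only makes explicit what the paper leaves implicit.
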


\begin{proof}
Noting that \eqref{F-G-B} is naturally satisfied as $\{F(p_k)\}$ is non-increasing monotone and $\bar p$ is a cluster point, we get from Theorem \ref{Local-Convergence}(i) 
that there exists $\delta>0$ such that any sequence generated by Algorithm \ref{SDAA} with initial point in $\IB(\bar p,\delta)$ is convergent. Now $\bar p$ is a cluster point, so there exists some $k_0\in \IN$ such that $p_{k_0}\in \IB(\bar p,\delta)$. Thus, $\{p_k\}$ converges to some point, which in fact equals to $\bar p$ and assertion (i) holds.

With a similar argument that we did for assertion (i), but using Theorem \ref{Local-Linear} 
instead
of Theorem \ref{Local-Convergence}(i), one sees that assertions (ii) holds. 
The proof is complete.
\end{proof}

The following lemma provides some sufficient conditions ensuring the boundedness of the sequence $\{p_k\}$ generated by Algorithm \ref{SDAA} (and so the existence of a cluster point).
Set
$$\mathcal{L}_0:=\{p\in M:F(p)\preceq F(p_0)\}.$$

\begin{lemma}\label{lemma-BX}
Let $\{p_k\}$ be a sequence generated by Algorithm \ref{SDAA} with initial point $p_0$. Then, $\{p_k\}$ is bounded 
provided one of the assumptions {\rm (a)} and {\rm (b)} holds:

{\rm (a)} $\mathcal{L}_0$ is bounded.

{\rm (b)}  $\mathcal{L}_0$ is totally convex with  its curvatures  being  bounded from below and $F$ is quasi-convex  on   $\mathcal{L}_0$ (e.g., $F$ is quasi-convex  on $M$ and $M$ is of lower bounded curvatures).
\end{lemma}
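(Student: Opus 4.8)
The plan is to prove the boundedness of $\{p_k\}$ under each of the two assumptions separately, relying on the monotonicity property $F(p_{k+1})\preceq F(p_k)$ from Proposition \ref{PC}(i), which guarantees that the entire sequence $\{p_k\}$ lies in the sublevel set $\mathcal{L}_0$. Under assumption (a), the argument is essentially immediate: since every $p_k\in\mathcal{L}_0$ and $\mathcal{L}_0$ is bounded, the sequence $\{p_k\}$ is bounded, and we are done. The substance of the lemma therefore lies in treating assumption (b), where boundedness of $\mathcal{L}_0$ itself is not assumed but must be derived from the quasi-convexity of $F$ together with the curvature bound.

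For assumption (b), the natural approach is to establish that $\{p_k\}$ is quasi-Fej\'er convergent to some nonempty target set and then invoke Lemma \ref{fejer2} to conclude boundedness. First I would argue that $\mathcal{L}_0$ is nonempty (it contains $p_0$) and fix a reference point; the cleanest candidate target set is $\mathcal{L}_0$ itself or a further sublevel set $\{p\in\mathcal{L}_0: F(p)\preceq \inf_k F(p_k)\}$. Since $\mathcal{L}_0$ is totally convex, it is in particular weakly convex, so for each $p_k\in\mathcal{L}_0$ and each reference point $q\in\mathcal{L}_0$ there is a minimal geodesic joining them lying in $\mathcal{L}_0$; moreover the geodesic segments $\gamma_k([0,t_k])$ stay in $\mathcal{L}_0$ by total convexity. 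This is precisely the setting in which Lemma \ref{basic-QC} applies: the vectors $v_k$ are s-compatible by assumption \textbf{(A$_{sc}$)}, the curvatures on $\mathcal{L}_0$ are bounded below by some $\kappa<0$, and $F$ is quasi-convex on $\mathcal{L}_0$. Applying \eqref{basic-QC-1} with $p_k,v_k,t_k$ in place of $p,v,t$ and any $q\in\mathcal{L}_0$ with $F(q)\preceq F(p_{k+1})$ yields an inequality of the quasi-Fej\'er type $\mathrm{d}^2(p_{k+1},q)\le \mathrm{d}^2(p_k,q)+C\,t_k^2\|v_k\|^2$, with the error terms summable by \eqref{GA-P1} in Proposition \ref{PC}(ii).

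The key technical point I would verify carefully is that the geometric hypothesis $\sqrt{|\kappa|}\,t_k\|v_k\|\le 1$ needed to invoke \eqref{basic-QC-1} holds, at least eventually; since $t_k\le R$ and $\|v_k\|\to 0$ (which follows from $\sum_k t_k^2\|v_k\|^2<\infty$ when combined with any lower bound on the step sizes, or can be extracted tail-wise), this should hold for all large $k$, and the boundedness of the finitely many initial iterates is automatic. I would also need the denominator factor $\hbar(\sqrt{|\kappa|}\,\mathrm{d}(p_k,q))$ to be bounded away from zero, which requires a uniform bound on $\mathrm{d}(p_k,q)$; this is exactly the mild circularity to watch, and the standard resolution is to run the quasi-Fej\'er argument on a suitable bounded portion or to absorb the curvature factor into the constant once an a priori diameter bound on the relevant sublevel set is secured. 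The main obstacle I anticipate is this interplay between the curvature-dependent factor in \eqref{basic-QC-1} and the need for an a priori distance bound to control it; once that is handled, quasi-Fej\'er convergence to $\mathcal{L}_0$ follows from the summability in \eqref{GA-P1}, and Lemma \ref{fejer2} delivers boundedness, completing the proof. The parenthetical special case (where $F$ is quasi-convex on all of $M$ and $M$ has lower-bounded curvature) then follows since $\mathcal{L}_0\subseteq M$ inherits both properties directly.
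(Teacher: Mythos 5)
Your treatment of assumption (a) is exactly the paper's (the paper's entire proof of (a) is: $\{p_k\}\subseteq\mathcal{L}_0$ by monotonicity of $\{F(p_k)\}$, hence bounded). For (b) the paper gives no self-contained argument at all --- it defers in one line to the proof of \cite[Theorem 3.7]{Wangxm2019} --- so your attempt to reconstruct the quasi-Fej\'{e}r scheme is the right instinct, and you assemble the correct ingredients: \textbf{(A$_{sc}$)}, Lemma \ref{basic-QC}, the summability \eqref{GA-P1}, total convexity to keep the geodesic segments $\gamma_k([0,t_k])$ inside $\mathcal{L}_0$. (Incidentally, the hypothesis $\sqrt{|\kappa|}\,t_k\|v_k\|\le 1$ is cheaper than you make it: $t_k\|v_k\|\to 0$ follows from \eqref{GA-P1} alone, with no lower bound on the step sizes.) But one of your steps fails outright: ``quasi-Fej\'{e}r convergent to $\mathcal{L}_0$'' is not available, because \eqref{basic-QC-1} at stage $k$ requires $F(q)\preceq F(p_k)$, whereas a generic $q\in\mathcal{L}_0$ only satisfies $F(q)\preceq F(p_0)$. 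The set you would actually need, $L:=\{q\in M:\ F(q)\preceq F(p_k)\ \mbox{for all }k\}$, can be empty, and its nonemptiness cannot be derived from (b); indeed, boundedness can genuinely fail without it. Take $M=\IR$, $n=1$, $f(x)=e^{-x}$, $v_k:=v(p_k)=e^{-p_k}$, and let $s_\beta>0$ solve $e^{-s}=1-\beta s$; choosing $t_k:=\min\{R,\,s_\beta e^{p_k}\}$ satisfies \eqref{GDA-2}, and $p_{k+1}=p_k+\min\{Re^{-p_k},\,s_\beta\}$ forces $p_k\to+\infty$ (were $\{p_k\}$ bounded, the increments would be bounded below by a positive constant). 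Here $\mathcal{L}_0=[p_0,+\infty)$ is totally convex and flat, $f$ is quasi-convex on all of $\IR$, and even $\varphi(p_0)=e^{-p_0}<\infty$, yet the sequence is unbounded. So the existence of a dominating point $q$ (equivalently $L\neq\emptyset$) is not a technicality but the crux; it is what the outsourced argument in \cite{Wangxm2019} must be supplying, and your proposal cannot manufacture it from the hypotheses as stated.

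The second gap is the $\hbar$-circularity you flag, but your proposed repair (``an a priori diameter bound on the relevant sublevel set'') is itself circular: under (b) no such bound exists, since otherwise one is back in case (a). The device the paper has pre-installed for exactly this purpose is Lemma \ref{lemma3}, not Lemma \ref{fejer2}. Given a dominating point $q\in L$, one majorizes the factor $1/\hbar\bigl(\sqrt{|\kappa|}\,{\rm d}(p_k,q)\bigr)$ affinely in ${\rm d}(p_k,q)$ (as in \cite{WLWYSIAM2019,WangOptim2018}) and absorbs the linear term via $2\,{\rm d}(p_k,q)\le 1+{\rm d}^2(p_k,q)$, so that \eqref{basic-QC-1} becomes $a_{k+1}\le a_k(1+b_k)+c_k$ with $a_k:={\rm d}^2(p_k,q)$ and $b_k,c_k$ constant multiples of $t_k^2\|v_k\|^2$, hence summable by \eqref{GA-P1}; Lemma \ref{lemma3} (in its routine variant allowing the additive summable term $c_k$) then gives boundedness of $\{a_k\}$ directly, with no a priori distance bound and no appeal to quasi-Fej\'{e}r monotonicity in the strict sense of \eqref{def-fejer}. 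In summary: your (a) coincides with the paper's; your (b) is the correct skeleton of the argument the paper cites rather than proves, but it is incomplete at precisely these two points --- the first cannot be closed from the stated hypotheses at all, and the second is closed by Lemma \ref{lemma3} rather than by the diameter bound you suggest.
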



\begin{proof}
Note that $\{p_k\}\subseteq \mathcal{L}_0$ as $\{F(p_k)\}$ is non-increasing monotone. Then, $\{p_k\}$ is clear bounded under assumption {\rm (a)} .
Under assumption (b), with a similar argument as in the proof for \cite[Theorem 3.7]{Wangxm2019}, one can check that $\{p_k\}$ is   bounded.
\end{proof}

The following corollary is immediate from Theorem \ref{full-1}  and Lemma \ref{lemma-BX}. Particularly, 
the global convergence result (assertion (i)) under assumption {\rm (b)} in Lemma \ref{lemma-BX}  extends the corresponding one in \cite[Theorem 3.7]{Wang2010}   which was established   for the case when  Algorithm \ref{SDAA} employs the Armijo step sizes (noting that in this case    any cluster point $\bar p$  of a generated  sequence    satisfies \eqref{assumption-a} by Proposition \ref{PC}(iii)). As for assertion (ii), as far as we know, it is new even  in the linear space  setting.

\begin{corollary}\label{full-AG}
Suppose that one of assumptions {\rm (a)}  and {\rm (b)}  in Lemma \ref{lemma-BX} holds.
Then, any sequence $\{p_k\}$ generated by Algorithm \ref{SDAA}   has at least a cluster point $\bar p$; furthermore, if $\bar p$ satisfies \eqref{assumption-a}, then assertions {\rm (i)} and {\rm (ii)} in Theorem \ref{full-1} hold.
\end{corollary}

\begin{Acknowledgments} 
Research of the 
author was supported in part by the National Natural Science
Foundation of China (grant numbers 11661019,  11771397).
\end{Acknowledgments}


\begin{thebibliography}{10}

\bibitem{Absil2008}
{\sc P.~A. Absil, R.~Mahony and R.~Sepulchre}, {\em {Optimization Algorithms
  on Matrix Manifolds}}, Princeton University Press, Princeton, 2008.

\bibitem{Bento2012M}
{\sc G.~C. Bento, O.~P. Ferreira and P.~R. Oliveira}, {\em {Unconstrained
  steepest descent method for multicriteria optimization on Riemannian
  manifolds}}, J. Optim. Theory Appl., 154 (2012), pp.~88--107.

\bibitem{Bento2013M}
{\sc G.~C. Bento, J.~X. D.~C. Neto and P.~S.~M. Santo}, {\em {An Inexact Steepest Descent
  Method for Multicriteria Optimization on Riemannian Manifolds}}, J. Optim.
  Theory Appl., 159 (2013), pp.~108--124.

\bibitem{Bishop1964}
{\sc R.~L. Bishop and R.~J. Crittenden}, {\em {Geometry of Manifold}}, Acdemic
  Press, New York and London, 1964.

\bibitem{BonnelIS2005}
{\sc H.~Bonnel, A.~N. Iusem and B.~F. Svaiter}, {\em Proximal Methods in
  Vector Optimization}, Society for Industrial and Applied Mathematics, 2005.

\bibitem{ChenHY2005}
{\sc G.~Y. Chen,  X.~Huang and ~X. Yang}, {\em Vector Optimization}, Springer,
  Berlin Heidelberg, 2005.

\bibitem{DasDennis1998}
{\sc I.~Das and J.~E. Dennis}, {\em {Normal-Boundary Intersection: A New Method
  for Generating the Pareto Surface in Nonlinear Multicriteria Optimization
  Problems}}, SIAM J. Optim., 8 (1998), pp.~631--657.

\bibitem{Carmo1992}
{\sc M.~P. DoCarmo}, {\em {Riemannian Geometry}}, Birkh\"{a}user Boston, Boston
  MA, 1992.

\bibitem{DrummondI2004}
{\sc L.~M.~G. Drummond and A.~N. Iusem}, {\em {A projected gradient method for
  vector optimization problems}}, Computational Optimization and Applications,
  28 (2004), pp.~5--29.

\bibitem{Drummond2008}
{\sc L.~M.~G.~Drummond, N.~Maculan and B.~F.~Svaiter}, {\em {On the choice of
  parameters for the weighting method in vector optimization}}, Math. Program.,
  111 (2008), pp.~201--216.

\bibitem{Drummond2005M}
{\sc L.~M.~G. Drummond and B.~F. Svaiter}, {\em {A steepest descent method for
  vector optimization}}, J. Comput. Appl. Math., 175 (2005), pp.~395--414.

\bibitem{Ferreira2005}
{\sc O.~P. Ferreira, L.~R. lucambio P\'{e}rez and S.~Z. N\'{e}meth}, {\em
  {Singularities of monotone vector fields and an extragradient-type
  algorithm}}, J. Global Optim., 31 (2005), pp.~133--151.

\bibitem{Ferreira1998}
{\sc O.~P. Ferreira and P.~R. Oliveira}, {\em {Subgradient Algorithm on
  Riemannian Manifolds}}, J. Optim. Theory Appl., 97 (1998), pp.~93--104.

\bibitem{Fliege2009SIAM}
{\sc J.~Fliege, L.M.~Gra\ {n}a Drummond and B.~F. Svaiter}, {\em {Newton's
  Method for Multiobjective Optimization}}, SIAM J. Optim., 20 (2009),
  pp.~602--626.

\bibitem{Fliege2000M}
{\sc J.~Fliege and B.~F. Svaiter}, {\em {Steepest descent methods for
  multicriteria optimization}}, Math. Methods Oper. Res., 51 (2000),
  p.~479¨C494.

\bibitem{Fonseca1995}
{\sc C.~M. Fonseca and P.~J. Fleming}, {\em {An overview of evolutionary
  algorithms in multiobjective optimization}}, Evolutionary Computation, 3
  (1995), pp.~1--16.

\bibitem{FukudaD2011}
{\sc E.~H. Fukuda and L.~M.~G. Drummond}, {\em {On the convergence of the
  projected gradient method for vector optimization}}, Optimization, 60 (2011),
  pp.~1009--10221.

\bibitem{FukudaD2013}
\leavevmode\vrule height 2pt depth -1.6pt width 23pt, {\em {Inexact projected
  gradient method for vector optimization}}, Computational Optimization and
  Applications, 54 (2013), pp.~493--493.

\bibitem{Geoffrion1968}
{\sc A.~M. Geoffrion}, {\em {Proper efficiency and the theory of vector
  maximization}}, Journal of Mathematical Analysis and Applications, 22 (1968),
  pp.~618--630.

\bibitem{Jahn1984}
{\sc J.~Jahn}, {\em {Scalarization in vector optimization}}, Math. Program., 29
  (1984), pp.~203--218.

\bibitem{LiMWY2011}
{\sc C.~Li, B.~S. Mordukhovich, J.~Wang and J.~C. Yao}, {\em {Weak sharp
  minima on Riemannian manifolds}}, SIAM J. Optim., 21 (2011), pp.~1523--1560.

\bibitem{LiY2012}
{\sc C.~Li and J.~C. Yao}, {\em {Variational inequalities for set-valued vector
  fields on Riemannian manifolds: convexity of the solution set and the
  proximal point algorithm}}, SIAM J. Control Optim., 50 (2012),
  pp.~2486--2514.

\bibitem{LiLi2009}
{\sc S.~L.~Li, C.~ Li, Y.C. Liou and J.C. Yao}, {\em {Existence of solutions for
  variational inequalities on Riemannian manifolds}}, Nonlinear Anal., 71
  (2009), pp.~5695--5706.

\bibitem{WLWYSIAM2019}
{\sc J.~H. Wang, X.~M. Wang,  C.~Li and J.~C. Yao}, {\em {Convergence Analysis of Gradient Algorithms on Riemannian Manifolds Without Curvature Constraints and Application to Riemannian Mass}}, SIAM J. Optim,  2021, 31(1),  172-199.

\bibitem{Luc1989}
{\sc D.~T. Luc}, {\em {Theory of Vector Optimization. Lecture Notes in Economy
  and Mathematical Systems}}, Springer vol 319, Berlin Heidelberg New York,
  1989.

\bibitem{Mahony1996}
{\sc R.~E. Mahony}, {\em {The constrained Newton method on Lie group and the
  symmetric eigenvalue problem}}, linear Algebra Appl., 248 (1996), pp.~67--89.

\bibitem{Miettinen1999}
{\sc K.~M. Miettinen}, {\em {Nonlinear Multiobjective Optimization}}, Kluwer,
  Norwel, 1999.

\bibitem{Miller2005}
{\sc S.~A. Miller and J.~Malick}, {\em {Newton methods for nonsmooth convex
  minimization: Connections among U-Lagrangian, Riemannian Newton and SQP
  methods}}, Math. Program., 104 (2005), pp.~609--633.
\bibitem{Nemeth1998}
{\sc S.~Z. N\'{e}meth}, {\em {Five kinds of monotone vector fields}}, Pure
  Math. Appl., 9 (1998), pp.~417--428.


\bibitem{Ryu2014SAIM}
{\sc J.~H. Ryu and S.~Kim}, {\em {A derivative-free Trust-Region method for
  biobjective optimization}}, SIAM J. Optim., 24 (2014), pp.~334--362.

\bibitem{Papa2008}
{\sc E.~A. papa Quiroz, E.~M. Quispe, and P.~R. Oliveira}, {\em {Steepest
  descent method with a generalized Armijo search for quasiconvex functions on
  Riemannian manifolds}}, J. Math. Anal. Appl., 341 (2008), pp.~467--477.

\bibitem{Sakai1996}
{\sc T.~Sakai}, {\em {Riemannian Geometry. Translations of Mathematical
  Monographs}}, Am. Math. Soc., Providence, 1996.

\bibitem{Smith1994}
{\sc S.~T. Smith}, {\em Geometric Optimization Methods for Adaptive Filtering},
  phd thesis, Harvard University Cambridge Massachusetts, 1994.

\bibitem{Udriste1994}
{\sc C.~Udriste}, {\em {Convex Functions and Optimization Methods on Riemannian
  Manifolds. In: Mathematics and Its Applications}}, Kluwer Academic,
  Dordrecht, 1994.

\bibitem{HuCWLYSiam}
{\sc J. H. ~Wang, Y.~Hu, C.~K.~W. Yu, C.~Li and X.~Yang}, {\em {Extended Newton methods for
  multiobjective optimization: majorizing function technique and convergence
  analysis}}, SIAM J. Optim., 29 (2019), pp.~2388--2421.

\bibitem{Wang2010}
{\sc J.~H. Wang, G.~L\'{o}pez, V.~Mart\'{\i}n-M\'{a}rquez and C.~Li}, {\em
  {Monotone and accretive vector fields on Riemannian manifolds}}, J. Optim.
  Theory Appl., 146 (2010), pp.~691--708.

\bibitem{WangOptim2018}
{\sc X.~M. Wang}, {\em {Subgradient algorithms on Riemannian manifolds of lower
  bounded curvatures}}, Optimization, 67 (2018), pp.~179--194.
\bibitem{Wangxm2019}
{\sc X.~M. Wang}, {\em {An Inexact Descent Algorithm for Multicriteria Optimizations on General Riemannian Manifolds}}, ?.
\bibitem{WLYJOTA2015}
{\sc X.~M. Wang, C.~Li, and J.~C. Yao}, {\em {Subgradient projection algorithms
  for convex feasibility on Riemannian manifolds with lower bounded
  curvatures}}, J. Optim. Theory Appl., 164 (2015), pp.~201--217.

\bibitem{WLWYSIAM2015}
{\sc X.~M. Wang, C.~Li, J.~H. Wang and J.~C. Yao}, {\em {Linear convergence of
  subgradient algorithm for convex feasibility on Reimannian manifolds}}, SIAM
  J. Optim., 25 (2015), pp.~2334--2358.

\bibitem{Yang2007}
{\sc Y.~Yang}, {\em {Globally convergent optimization algorithms on Riemannian
  manifolds: uniform framework for unconstrained and constrained
  optimization}}, J. Optim. Theory Appl., 132 (2007), pp.~245--265.

\bibitem{Yau1974}
{\sc S.~T. Yau}, {\em {Non-existence of continuous convex functions on certain
  Riemannian manifolds}}, Math. Ann., 207 (1974), pp.~269--270.

\end{thebibliography}
\end{document}